\documentclass[11pt,reqno]{amsart}

\usepackage{amsfonts, amsthm, amsmath}
\allowdisplaybreaks[4]

\usepackage{rotating}

\usepackage{tikz}

\usepackage{graphics}

\usepackage{amssymb}

\usepackage{amscd}

\usepackage[latin2]{inputenc}

\usepackage{t1enc}

\usepackage[mathscr]{eucal}

\usepackage{indentfirst}

\usepackage{nicematrix}

\usepackage{graphicx}

\usepackage{graphics}

\usepackage{pict2e}

\usepackage{mathrsfs}

\usepackage{enumerate}
\usepackage[pagebackref]{hyperref}
\hypersetup{colorlinks=true}
\usepackage{color}
\usepackage{epic}
\usepackage{bm}
\usepackage{hyperref} %this gives clickable references, which is nice
\usepackage{framed}
\usepackage{mathabx}
\usepackage{stmaryrd}

\numberwithin{equation}{section}
\theoremstyle{plain}

\newtheorem{theorem}{Theorem}[section]

\newtheorem{lemma}[theorem]{Lemma}

\theoremstyle{definition}

\newtheorem{Def}[theorem]{Definition}

\newtheorem{example}[theorem]{Example}

\newtheorem{remark}[theorem]{Remark}

\newtheorem{?}[theorem]{Problem}

\def\la{\lambda}
\def\sR{\mathscr{R}}
\def\sP{\mathscr{P}}
\def\sI{\mathscr{I}}
\def\cL{\mathcal{L}}
\def\ep{\epsilon}
\def\diag{\mathrm{diag}}
\def\rA{\mathrm{A}}
\def\sA{\mathscr{A}}
\def\sB{\mathscr{B}}
\def\rW{\mathrm{W}}
\def\uA{\underline{\mathbf{A}}}
\def\ual{\underline{{\bm\alpha}}}
\def\ubeta{\underline{{\bm\beta}}}
\def\uga{\underline{{\bm\gamma}}}
\def\ugaI{\underline{{\bm\gamma}}_1}
\def\ugaII{\underline{{\bm\gamma}}_2}

\def\ri{\rightarrow}
\def\al{\alpha}
\def\mat{\mathrm{Mat}}
\def\bN{\mathbb{N}}
\def\ga{\gamma}
\def\bZ{\mathbb{Z}}
\def\sS{\mathscr{S}}
\def\sE{\mathscr{E}}
\def\mod{\mathrm{mod}}
\def\bC{\mathbb{C}}
\def\bn{\mathbf{n}}

\begin{document}

\title[Linked partition ideals and gap-frequency partitions]{Linked partition ideals and gap-frequency partitions}

\author[H. Li]{Haijun Li}
\address[Haijun Li]{College of Mathematics and Statistics, Chongqing University, Chongqing 401331, P.R. China}
\email{lihaijun@cqu.edu.cn; lihaijune@163.com}

\date{\today}

\begin{abstract}
Recently, linked partition ideals have attracted renewed attention. In this paper, we embed the gap-frequency partitions in which each part appears at most twice or three times into the framework of span one linked partition ideals, and derive refined generating functions for both cases. Furthermore, for gap-frequency partitions in which every part appears at most $k$ times, we provide a purely combinatorial proof of the corresponding refined generating function.
\end{abstract}

\keywords{Gap-frequency partition, linked partition ideal, Andrews-Gordon type series, basis partition, bijective combinatorics.
\newline \indent 2020 {\it Mathematics Subject Classification}. 11P84, 05A17, 05A19.}

% Secondary 05A15, 05A19, 05E18, 11A55, 11B68.
\maketitle
%%%%%%%%%%%%%%%%%%%%%%%%%%%%%%%%%%%%%%%%%%%%%%%%%%%%%%%%
\section{Introduction}

For a nonnegative integer $n$, a partition $\la$ of $n$ is a finite non-decreasing sequence of positive integers $\la=\la_1+\la_2+\cdots+\la_{\ell}$ whose sum is $n$. Usually, the {\it weight} of $\la$ is denoted by $|\la|=n$, each $\la_i$ is called the {\it part} of $\la$ and  the number of parts $\ell$ is called the {\it length} of $\la$, denoted by $\ell(\la)$. 

One of the central concerns in partition theory is the following Rogers-Ramanujan identities \cite{ram14, ram19, rog94}, stated here in the partition version due to MacMahon \cite{mac16} and Schur \cite{sch17}.

\begin{theorem}[Rogers-Ramanujan identities, partition version]\label{thm:RRid}
For any integer $n\geq 1$,
\begin{itemize}
\item[(1)] the number of partitions of $n$ into parts congruent to $\pm 1$ modulo $5$ is the same as the number of partitions of $n$ such that every two consecutive parts have difference at least $2$;

\item[(2)] the number of partitions of $n$ into parts congruent to $\pm 2$ modulo $5$ is the same as the number of partitions of $n$ such that every two consecutive parts have difference at least $2$ and that the smallest part is greater than $1$.
\end{itemize}
\end{theorem}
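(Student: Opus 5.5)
The plan is to reduce both parts of Theorem~\ref{thm:RRid} to the analytic Rogers--Ramanujan identities
\[
\sum_{n\ge 0}\frac{q^{n^2}}{(q;q)_n}=\frac{1}{(q;q^5)_\infty (q^4;q^5)_\infty},\qquad
\sum_{n\ge 0}\frac{q^{n^2+n}}{(q;q)_n}=\frac{1}{(q^2;q^5)_\infty (q^3;q^5)_\infty},
\]
and to interpret each side combinatorially. The product sides are immediate: expanding each factor $1/(1-q^{j})$ as a geometric series shows that $1/\bigl((q;q^5)_\infty(q^4;q^5)_\infty\bigr)$ generates partitions into parts $\equiv\pm1 \pmod 5$, and similarly for $\pm2$.

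For the sum sides, I would fix the length $\ell(\la)=n$ and count partitions $\la_1<\la_2<\cdots<\la_n$ with $\la_{i+1}-\la_i\ge 2$ and $\la_1\ge 1$ (respectively $\la_1\ge 2$). Subtracting the staircase $1,3,5,\dots,2n-1$ (respectively $2,4,\dots,2n$) via $\mu_i=\la_i-(2i-1)$ (respectively $\mu_i=\la_i-2i$) gives a bijection onto non-decreasing sequences $0\le\mu_1\le\cdots\le\mu_n$, that is, partitions into at most $n$ parts. The generating function of these is $1/(q;q)_n$, and the removed staircase has weight $n^2$ (respectively $n^2+n$). Summing over $n$ identifies the left sides above with the difference-two partitions in (1) and (2), and comparing coefficients of $q^n$ yields the theorem.

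The main obstacle is the analytic identities themselves, which I would prove rather than cite. I would first try the standard functional-equation route. Define $G(x)=\sum_{n\ge0} x^n q^{n^2}/(q;q)_n$. Combinatorially, $G(x)$ is the generating function of difference-two partitions with $x$ marking length; removing a part $1$ (if present) and shifting all parts down by one gives
\[
G(x)=G(xq)+xq\,G(xq^2).
\]
Then write $G(x)/(xq;q)_\infty$ as a power series $\sum c_n x^n$ and derive a recurrence for $c_n$ that solves to an explicit alternating sum, namely Rogers' series $\sum_n (-1)^n x^{2n}q^{n(5n-1)/2}(1-xq^{2n})(xq;q)_{n-1}/\bigl((q;q)_n (xq;q)_\infty\bigr)$ or its equivalent. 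Specializing $x=1$ and $x=q$ reduces the alternating sums to the Jacobi triple product with modulus $5$, producing the two products. The delicate point is getting the correct closed form for $c_n$; the recurrence check is then routine. If this bookkeeping becomes unwieldy, I would fall back on Bailey's lemma with the unit Bailey pair, which gives the same alternating sums directly before the triple product is applied.
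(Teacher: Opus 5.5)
The paper gives no proof of this theorem. It is quoted in the introduction as classical background (Rogers, Ramanujan, MacMahon, Schur), so there is no argument of the author's to compare yours against. The closest the paper comes is the example after Definition~\ref{def:LPI} combined with Lemma~\ref{lem:key rec}. Running the paper's method on $\sR=\sI(\langle\Pi_{\sR},\cL_{\sR}\rangle,2)$ with $R=1$, $\al_{1,1}=2$, $A_1=1$ would identify the generating functions (by length and weight) of $\sR$ and of its members with smallest part $\geq 2$ with $H(1)=\sum_n x^nq^{n^2}/(q;q)_n$ and $H(2)=\sum_n x^nq^{n^2+n}/(q;q)_n$. These are exactly the sum sides your staircase subtraction produces, and your route gets there more directly. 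Neither approach touches the product sides, which is where all the difficulty lies.

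Your overall route is the standard proof and it works. The combinatorial reduction is complete, and the Rogers series you name is correct. At $x=1$ it collapses to $\frac{1}{(q;q)_\infty}\sum_{n\in\bZ}(-1)^nq^{n(5n+1)/2}$, and at $x=q$ to $\frac{1}{(q;q)_\infty}\sum_{n\in\bZ}(-1)^nq^{n(5n+3)/2}$. The triple product in base $q^5$ turns these into the two products.

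Two points in the analytic sketch need fixing.

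First, the series to expand is $K(x):=G(x)(xq;q)_\infty$, not $G(x)/(xq;q)_\infty$.

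Second, you will not reach the alternating sum by solving a recurrence for coefficients of $x^n$. Rogers' series is not of the form $\sum_n c_nx^n$ with $x$-free $c_n$, since each term carries $(xq;q)_{n-1}$. Moreover, the coefficient recurrence for $K$ relates four consecutive coefficients. The argument runs the other way: define $K(x)$ as the alternating sum and verify directly that
\[
K(x)-(1-xq)K(xq)=xq(1-xq)(1-xq^2)K(xq^2).
\]
This says precisely that $K(x)/(xq;q)_\infty$ satisfies $G(x)=G(xq)+xqG(xq^2)$. Then conclude by uniqueness: that $q$-difference equation forces $(1-q^n)c_n=q^{2n-1}c_{n-1}$ on the coefficients of $x^n$, so a power-series solution is determined by its constant term $1$.

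This verification is the real content of the proof, and your proposal still owes it. It is carried out, for general $k$, in Chapter~7 of \cite{andtp}. Your Bailey fallback sidesteps it. One step of the Bailey chain from the unit pair relative to $a=x$ gives the pair with $\beta_n=1/(q;q)_n$. The limiting form $\sum_n a^nq^{n^2}\beta_n=(aq;q)_\infty^{-1}\sum_n a^nq^{n^2}\alpha_n$ then yields your Rogers series for $G(x)$ with no guessing, so that is probably the cleaner way to finish.
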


There have been numerous generalizations and analogs of the Rogers-Ramanujan identities, among which Gordon's extension \cite{gor61} to higher moduli is of substantial significance.

\begin{theorem}[Gordon's identities]\label{thm:Gid}
Let $k$ and $i$ be positive integers such that $k\geq 2$ and $1\leq i\leq k$. Let $\sA_{k, i}$ be the set of partitions $\la$ where $\la_{j+k-1}-\la_j\geq 2$ and at most $(i-1)$ of the parts are equal to $1$. Let $\sB_{k, i}$ be the set of partitions into parts that are not congruent to $0$, $\pm i$ modulo $(2k+1)$. For an integer $n\geq 0$, let $A_{k, i}(n)$ (resp. $B_{k, i}$) denote the number of partitions of $n$ which belong to $\sA_{k, i}$ (resp. $\sB_{k, i}$). Then we have
\begin{align*}
A_{k, i}(n)=B_{k, i}(n).
\end{align*}
\end{theorem}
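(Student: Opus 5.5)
The plan is the classical analytic route through Andrews' $J$-functions. Once we have shown that the generating function of $\sA_{k,i}$ equals
\begin{align*}
\prod_{\substack{n\geq 1\\ n\not\equiv 0,\pm i \ (\mathrm{mod}\ 2k+1)}}\frac{1}{1-q^n},
\end{align*}
the identity $A_{k,i}(n)=B_{k,i}(n)$ follows at once, since the right side is by definition the generating function of $\sB_{k,i}$.

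\textbf{Step 1: encode by frequencies.} Write $f_j$ for the number of times $j$ appears. The condition $\la_{j+k-1}-\la_j\geq 2$ says exactly that $f_j+f_{j+1}\leq k-1$ for all $j$, and the initial condition says $f_1\leq i-1$. Introduce a variable $x$ marking the number of parts and let $J_{k,i}(x;q)$ be the generating function $\sum x^{\ell(\la)}q^{|\la|}$ over $\sA_{k,i}$.

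\textbf{Step 2: functional equations.} Condition on $f_1$ and subtract $1$ from every part. This yields the recursions
\begin{align*}
J_{k,i}(x)-J_{k,i-1}(x)=(xq)^{i-1}J_{k,k-i+1}(xq),
\end{align*}
with $J_{k,0}=0$ and $J_{k,i}(0)=1$. These recursions determine the family $J_{k,i}$ uniquely as power series in $x$ and $q$. Following Andrews, I then define the explicit series
\begin{align*}
H_{k,i}(x)=\sum_{n\geq0}\frac{x^{kn}q^{(2k+1)n^2/2\cdots}(1-x^iq^{2ni})}{(q)_n(xq^{n+1})_\infty}(-1)^n\cdots
\end{align*}
Here the dots stand for the standard theta-type sum, and $J_{k,i}(x)=H_{k,i}(xq)-xqH_{k,i-1}(xq)$. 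The task is to verify that these $J$'s satisfy the same recursions and initial values. This verification is a routine but careful term-by-term manipulation of the sums.

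\textbf{Step 3: specialize $x=1$.} At $x=1$ the sum $H_{k,i}(1)$ telescopes into a bilateral theta series divided by $(q)_\infty$. The Jacobi triple product with modulus $2k+1$ then gives $\prod(1-q^{(2k+1)n})(1-q^{(2k+1)n-i})(1-q^{(2k+1)(n-1)+i})/(q)_\infty$, which is precisely the product above.

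The main obstacle is Step 2: choosing the correct series $H_{k,i}$ and checking that the recursions hold with matching boundary cases ($i=0$ and $i=k$). If the verification becomes unwieldy, I would switch to the Andrews--Gordon multisum
\[
\sum \frac{q^{N_1^2+\cdots+N_{k-1}^2+N_i+\cdots+N_{k-1}}}{(q)_{n_1}\cdots(q)_{n_{k-1}}}
\]
as an intermediate. I would show combinatorially that it counts $\sA_{k,i}$, using the frequency conditions and a decomposition of each partition into a base partition plus moved parts. I would then prove that the multisum equals the product via Bailey chains.
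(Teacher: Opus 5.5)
The paper does not prove this statement. It quotes Gordon's theorem as background from \cite{gor61}, with Andrews' analytic form from \cite{and742}, so your argument has to stand on its own. Your route is the classical one of \cite[Ch.~7]{andtp}. Step~1 is correct, as are the recursion $J_{k,i}(x)-J_{k,i-1}(x)=(xq)^{i-1}J_{k,k-i+1}(xq)$ with $J_{k,0}=0$, $J_{k,i}(0)=1$, the uniqueness claim, and the plan to finish with the triple product.

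The gap is Step~2, which carries the entire analytic content of the proof. The series $H_{k,i}$ is left as ``dots'', the verification is not done, and the relation you state is wrong. Your $J_{k,i}(x)=H_{k,i}(xq)-xqH_{k,i-1}(xq)$ is Andrews' general relation $J_{k,i}(a;x;q)=H_{k,i}(a;xq;q)-axq\,H_{k,i-1}(a;xq;q)$ with the factor $a$ lost. Gordon's theorem is the case $a=0$, where the second term is absent.

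As written, the relation even contradicts your own boundary condition. If, as in Andrews' series, $i$ enters only through $q^{-in}(1-x^iq^{2ni})$, then $H_{k,0}=0$ and $H_{k,-1}(x)=-x^{-1}H_{k,1}(x)$. Your relation then gives $J_{k,0}(x)=H_{k,1}(xq)$, whose constant term in $x$ is $1$, not $0$.

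Your displayed series also mixes the two objects. The denominator $(xq^{n+1};q)_\infty$ belongs to $H_{k,i}(xq)$, while the factor $x^{kn}(1-x^iq^{2ni})$ belongs to $H_{k,i}(x)$. Consequently Step~3 evaluates $H_{k,i}(1)$, although by your own relation you would need $H_{k,i}(q)-qH_{k,i-1}(q)$.

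What closes the gap is the following. Put
\[
H_{k,i}(x)=\sum_{n\ge 0}\frac{(-1)^n x^{kn}q^{kn^2+n-in+\binom{n}{2}}(1-x^iq^{2ni})}{(q;q)_n\,(xq^n;q)_\infty},\qquad J_{k,i}(x)=H_{k,i}(xq).
\]
The key lemma is $H_{k,i}(x)-H_{k,i-1}(x)=x^{i-1}H_{k,k-i+1}(xq)$. To prove it, write $q^{-in}(1-x^iq^{2ni})-q^{-(i-1)n}(1-x^{i-1}q^{2(i-1)n})=q^{-in}(1-q^n)+x^{i-1}q^{(i-1)n}(1-xq^n)$. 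Cancel $1-q^n$ against $(q;q)_n$ and shift $n\to n+1$; cancel $1-xq^n$ against $(xq^n;q)_\infty$. The two resulting sums match the two terms of $1-(xq)^{k-i+1}q^{2n(k-i+1)}$ in $x^{i-1}H_{k,k-i+1}(xq)$.

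From the lemma everything follows:
\begin{itemize}
\item replacing $x$ by $xq$ gives your recursion;
\item $H_{k,0}=0$ gives $J_{k,0}=0$;
\item only $n=0$ survives at $x=0$, so $J_{k,i}(0)=1$;
\item $J_{k,i}(1)=H_{k,i}(q)=(q;q)_\infty^{-1}\sum_{n\in\bZ}(-1)^nq^{(2k+1)\binom{n+1}{2}-in}$, which the triple product turns into the required product.
\end{itemize}

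Your fallback through the Andrews--Gordon multisum is a legitimate route in outline. However, both of its halves are substantial theorems that you only name: that the multisum counts $\sA_{k,i}$, and that it equals the product. So it does not supply the missing step either.
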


It is clear that the Rogers-Ramanujan identities corresponds to the cases $k=i=2$ and $k=i+1=2$ in Theorem \ref{thm:Gid}. Subsequently, Andrews \cite{and742} established the analytic counterpart of Gordon's result.
\begin{theorem}[Andrews-Gordon identities]\label{thm:AGid}
For $k\geq 2$ and $1\leq i\leq k$, we have
\begin{align}
\sum_{n_1, ..., n_{k-1}\geq 0}\frac{q^{N_1^2+N_2^2+\cdots +N_{k-1}^2+N_i+N_{i+1}+\cdots +N_{k-1}}}{(q; q)_{n_1}(q; q)_{n_2}\cdots (q; q)_{n_{k-1}}}=\prod_{\substack{n\geq 1\\ n\not\equiv 0, \pm i\ (\mod\ 2k+1)}}\frac{1}{1-q^n},
\end{align}
where $N_j=n_j+n_{j+1}+\cdots +n_{k-1}$.
\end{theorem}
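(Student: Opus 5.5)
The approach is Andrews' classical route: prove that the sum side of Theorem \ref{thm:AGid} is the generating function of $\sA_{k,i}$, and then invoke Gordon's identities (Theorem \ref{thm:Gid}), which identify that generating function with the product side. So everything reduces to a combinatorial (or $q$-difference) interpretation of the multisum.

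The plan is to use the Andrews $J$-function method. Define
\begin{align*}
J_{k,i}(a;x;q)=\sum_{n\ge0}\frac{x^{kn}q^{kn^2+kn-in}a^n(1-x^iq^{2ni})(axq^{n+1};q)_\infty (1/a;q)_n}{(q;q)_n (xq^n;q)_\infty},
\end{align*}
and specialize $a=0$ to obtain $J_{k,i}(0;x;q)$. The proof then has three steps.

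\begin{enumerate}[(i)]
\item Verify that $J_{k,0}=0$ and $J_{k,-i}=-x^{-i}J_{k,i}$.
\item Verify the fundamental recurrence
\begin{align*}
J_{k,i}-J_{k,i-1}=(xq)^{i-1}J_{k,k-i+1}(xq).
\end{align*}
This is a routine termwise manipulation of the series.
\item Check that the generating function $f_{k,i}(x)=\sum_{\lambda\in\sA_{k,i}}x^{\ell(\lambda)}q^{|\lambda|}$ satisfies the same recurrence with the same initial conditions. The combinatorial input is that removing the parts equal to $1$ and shifting all remaining parts down by $1$ maps $\sA_{k,i}\setminus\sA_{k,i-1}$ bijectively onto $\sA_{k,k-i+1}$, with weight and length shifted exactly as $(xq)^{i-1}$ and $x\mapsto xq$ indicate. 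Uniqueness of power-series solutions in $x$ then gives $f_{k,i}=J_{k,i}(0;x;q)$.
\end{enumerate}

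Setting $x=1$, Jacobi's triple product turns $J_{k,i}(0;1;q)$ into $\prod_{n\not\equiv0,\pm i \ (\mod\ 2k+1)}(1-q^n)^{-1}$; in fact this already recovers Theorem \ref{thm:Gid} independently.

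What remains is the sum side. I would show that
\begin{align*}
F_{k,i}(x):=\sum_{n_1,\dots,n_{k-1}\ge0}\frac{x^{N_1+\cdots+N_{k-1}}q^{N_1^2+\cdots+N_{k-1}^2+N_i+\cdots+N_{k-1}}}{(q;q)_{n_1}\cdots(q;q)_{n_{k-1}}}
\end{align*}
satisfies the same recurrence in $i$ with the same initial values. Then $F_{k,i}(x)=J_{k,i}(0;x;q)$, and setting $x=1$ finishes the proof. To do this, I would expand $(xq)^{i-1}F_{k,k-i+1}(xq)$ and use $1/(q;q)_{n}-1/(q;q)_{n-1}=q^n/(q;q)_n$, telescoping one summation index to match $F_{k,i}-F_{k,i-1}$.

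The main obstacle is this last matching step: keeping track of how the linear exponent $N_i+\cdots+N_{k-1}$ and the shift $x\mapsto xq$ interact under the index reversal $i\mapsto k-i+1$. If it fails, the fallback is combinatorial: give a direct bijection from $\sA_{k,i}$, sorted by multiplicity data $n_j$, to tuples of partitions into at most $n_j$ parts. The minimal partition with prescribed counts has weight $\sum N_j^2+\sum_{j\ge i}N_j$, and each factor $1/(q;q)_{n_j}$ accounts for free motion of the corresponding block of parts, following Kurşungöz's approach.
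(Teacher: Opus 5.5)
The paper gives no proof of this theorem: it is quoted from Andrews \cite{and742} as background, so there is no in-paper argument to compare with. On its own terms, your architecture is Andrews' classical one, and step (iii) (strip the $i-1$ ones and shift down by one) is correct. Two steps, however, do not hold up as written.

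\textbf{The series for $J_{k,i}$ is wrong.} The series you write for $J_{k,i}(a;x;q)$ is neither Andrews' $H_{k,i}$, whose exponent is $kn^2+n-in$, nor his $J_{k,i}$. With it, step (ii) fails at $a=0$. For $k\ge2$ only the $n=0$ term, $(1-x^i)/(x;q)_\infty$, contributes to the coefficient of $x$. So in $J_{k,2}-J_{k,1}$ that coefficient is the one of $x/(xq;q)_\infty$, namely $1$, while in $xq\,J_{k,k-1}(xq)$ it is $q$. The evaluation at $x=1$ also breaks. Your $n=0$ term becomes $0/0$ because $(1;q)_\infty=0$, and every $n\ge1$ term keeps a factor $1/(1-q^n)$, so Jacobi's triple product does not apply. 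What you want is
\[
H_{k,i}(a;x;q)=\sum_{n\ge0}\frac{x^{kn}q^{kn^2+n-in}a^n(1-x^iq^{2ni})(axq^{n+1};q)_\infty(1/a;q)_n}{(q;q)_n(xq^n;q)_\infty}
\]
together with $J_{k,i}(a;x;q)=H_{k,i}(a;xq;q)-xqa\,H_{k,i-1}(a;xq;q)$. The symmetry in (i) belongs to $H_{k,i}$, and one proves $H_{k,i}-H_{k,i-1}=x^{i-1}J_{k,k-i+1}$. At $a=0$ this gives your recurrence for
\[
J_{k,i}(0;x;q)=\sum_{n\ge0}\frac{(-1)^nx^{kn}q^{(2k+1)\binom{n+1}{2}-in}\bigl(1-x^iq^{(2n+1)i}\bigr)}{(q;q)_n(xq^{n+1};q)_\infty},
\]
whose value at $x=1$ is $(q;q)_\infty^{-1}\sum_{n\in\bZ}(-1)^nq^{(2k+1)\binom{n+1}{2}-in}$, as the triple product needs. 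In the present context you do not need $J$ at all. Step (iii), uniqueness, and Theorem~\ref{thm:Gid} already yield the product once the multisum is known to satisfy the recurrence.

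\textbf{The sum-side recurrence is not established.} That remaining step, which is the actual content of Andrews' theorem beyond Gordon's, is left as a plan, and the one-step telescoping you describe does not close it. For $2\le i\le k-1$ the extra factor in $F_{k,i}-F_{k,i-1}$ is $1-q^{N_{i-1}}$ with $N_{i-1}=n_{i-1}+\cdots+n_{k-1}$. This cancels no single $(q;q)_{n_j}$. Write $\Sigma=N_1+\cdots+N_{k-1}$. The natural shift $n_{i-1}\mapsto n_{i-1}+1$ produces the prefactor $x^{\Sigma+i-1}q^{\sum_jN_j^2}$ together with $q^{\Sigma+\sum_{j<i}N_j+i-1}$. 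But $(xq)^{i-1}F_{k,k-i+1}(xq)$ has $q^{\Sigma+\sum_{j>k-i}N_j+i-1}$ in that place: the top instead of the bottom $i-1$ of the $N_j$. Only $i=1$ (trivial) and $i=k$ (where $N_{k-1}=n_{k-1}$) close in one step.

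In general you must split $1-q^{N_{i-1}}$ into $k-i+1$ pieces and recombine them. A natural way to organize this is the paper's own tool. Take $F_{k,i}(x)=H(\beta^{(i)})$ in \eqref{id:q-multi} with $R=k-1$, the single variable $x_1=x$, $A_r=1$, $\alpha_{r,s}=2\min(r,s)$, $\gamma_{1,r}=r$ and $\beta^{(i)}_r=r+\max(0,r-i+1)$. Then reduce the needed relations by repeated use of Lemma~\ref{lem:key rec}. For $k=3$, $i=2$, the claim $H(1,3)-H(2,4)=xq\,H(2,5)$ follows from $H(1,3)=H(2,3)+xqH(3,5)$, $H(2,3)=H(2,4)+x^2q^3H(4,7)$ and $H(2,5)=H(3,5)+xq^2H(4,7)$. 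For general $k$ this bookkeeping still has to be carried out. The alternative is the base-partition bijection of Kur\c{s}ung\"{o}z \cite{kur10}, which your fallback names but does not execute. As it stands, the proposal does not prove the sum side.
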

Here and throughout we adopt the {\it $q$-Pochhammer symbols} \cite{GR04} for $n\in \bN\cup\{\infty\}$, 
\begin{align*}
(A; q)_n:=\prod_{k=0}^{n-1}(1-Aq^k)\ \text{ and }\ (A_1, ..., A_m; q)_n=(A_1; q)_n\cdots (A_m; q)_m\text{ for }m\geq 1.
\end{align*}

In addition to the celebrated Rogers-Ramanujan identities, Rogers also discovered many other interesting sum-product identities. Based on another set of identities about the set $\sB_{3, i}$ in \cite{rog94}, Andrews \cite{and80} investigated a class of partitions subject to difference conditions. A partition $\la$ is said to be a {\it gap-frequency partition} if whenever a part $a$ appears exactly $i$ times then the next strictly larger part is at least $a+i$ and if it is exactly $a+i$ then it can appear at most $i$ times. Thus $1+4+4+4+8+8+8$ is a gap-frequency partition, while $2+5+5+5+5+8+8+8$ is not since $5$ appears four times and $8-5=3<4$. Let $\sS^k$ be the set of gap-frequency partitions where each part appears at most $k$ times. Andrews proved the following generating function for $\sS^k$:
\begin{align}\label{id:g-f gf}
\sum_{\la\in\sS^k}x^{\ell(\la)}q^{|\la|}=\sum_{n_1, ..., n_k\geq 0}&\frac{x^{\sum_{i=1}^k in_i}q^{\sum_{i=1}^k(i^2\binom{n_i}{2}+in_i)+\sum_{1\leq i<j\leq k}ijn_in_j}}{(q; q)_{n_1}(q^2; q^2)_{n_2}\cdots (q^k; q^k)_{n_k}}.
\end{align}

Recently, K\i l\i\c{c} \cite{kil25} gave a new combinatorial interpretation of $\sS^2$ and its companions based on the base-increments framework. For the origins and development of this framework may be found in \cite{kur10, kur19, kur21, KS22}. The first objective of this paper is to embed $\sS^2$ and $\sS^3$ into the framework of span one linked partition ideals, that is, the generating functions of partitions in these sets satisfy recurrence or functional equations similar to those arising from a span one linked partition ideal. We shall not only derive the refined generating functions for partitions belonging to $\sS^2$ and $\sS^3$ but also obtain additional generating functions for such partitions with further restrictions on the smallest part.  Given a partition $\la\in \sP$, we denote by $\ell_k(\la)$ the number of parts of $\la$ that are repeated exactly $k$ times. We will obtain the following refinements.
\begin{theorem}\label{thm:Ak=2}
Let $\sS^2_{i}$ be the set of partitions in $\sS^2$ where at most $i$ of the parts are equal to $1$ for $i=0, 1, 2$ (note that $\sS^2_2=\sS^2$). Then we have
\begin{align}
\sum_{\la\in\sS^2}x^{\ell(\la)}y^{\ell_2(\la)}q^{|\la|}&=\sum_{n_1, n_2\geq 0}\frac{x^{n_1+2n_2}y^{n_2}q^{\binom{n_1}{2}+4\binom{n_2}{4}+2n_1n_2+n_1+2n_2}}{(q; q)_{n_1}(q^2; q^2)_{n_2}},\\
\sum_{\la\in\sS^2_1}x^{\ell(\la)}y^{\ell_2(\la)}q^{|\la|}&=\sum_{n_1, n_2\geq 0}\frac{x^{n_1+2n_2}y^{n_2}q^{\binom{n_1}{2}+4\binom{n_2}{4}+2n_1n_2+n_1+4n_2}}{(q; q)_{n_1}(q^2; q^2)_{n_2}},\\
\sum_{\la\in\sS^2_0}x^{\ell(\la)}y^{\ell_2(\la)}q^{|\la|}&=\sum_{n_1, n_2\geq 0}\frac{x^{n_1+2n_2}y^{n_2}q^{\binom{n_1}{2}+4\binom{n_2}{4}+2n_1n_2+2n_1+4n_2}}{(q; q)_{n_1}(q^2; q^2)_{n_2}}.
\end{align}
\end{theorem}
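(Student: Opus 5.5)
Throughout I read the exponent $4\binom{n_2}{4}$ as $4\binom{n_2}{2}$. This agrees with the specialization $k=2$ of \eqref{id:g-f gf} once $y$ is set to $1$.

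The plan is to treat $\sS^2$ as a span one linked partition ideal. I would encode it by a system of $q$-difference equations in $x$ and show that the three double sums satisfy the same system. For $i=0,1,2$ let $G_i(x)=G_i(x,y;q)=\sum_{\la\in\sS^2_i}x^{\ell(\la)}y^{\ell_2(\la)}q^{|\la|}$.

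\textbf{Step 1: combinatorial recurrences.} I classify $\la$ by the multiplicity of the part $1$.
\begin{itemize}
\item If $1$ does not occur, subtracting $1$ from every part is a bijection onto $\sS^2$. Hence $G_0(x)=G_2(xq)$.
\item If $1$ occurs exactly once, the gap-frequency rule says the next part is at least $2$, and if it equals $2$ it occurs at most once. Removing the $1$ and subtracting $1$ from all remaining parts is then a bijection onto $\sS^2_1$. This gives $G_1(x)=G_0(x)+xq\,G_1(xq)$.
\item If $1$ occurs exactly twice, the next part is at least $3$. The extra restriction (at most twice if the next part equals $3$) is automatic in $\sS^2$. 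Removing both $1$'s and subtracting $2$ from every remaining part is a bijection onto $\sS^2$, and the pair of $1$'s contributes one to $\ell_2$. This gives $G_2(x)=G_1(x)+x^2yq^2\,G_2(xq^2)$.
\end{itemize}
I would check carefully that these bijections preserve the gap-frequency condition among the remaining parts, since they are just translations.

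\textbf{Step 2: uniqueness.} The system is triangular: write each $G_i$ as a power series in $x$ with $G_i(0)=1$. Comparing coefficients of $x^m$ then determines every coefficient inductively, because the shifted terms carry factors $x$ or $x^2$. So any family $F_0,F_1,F_2$ of such series satisfying the same three equations equals $(G_0,G_1,G_2)$.

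\textbf{Step 3: the double sums satisfy the system.} Let $F_{a,b}(x)$ denote the double sum with linear part $an_1+bn_2$ in the exponent. The claimed right-hand sides are $F_2=F_{1,2}$, $F_1=F_{1,4}$ and $F_0=F_{2,4}$.
\begin{itemize}
\item $F_{2,4}(x)=F_{1,2}(xq)$ is immediate, since $x^{n_1+2n_2}\mapsto (xq)^{n_1+2n_2}$ adds exactly $n_1+2n_2$ to the exponent.
\item For $F_{1,4}-F_{2,4}$, the summand difference contains the factor $1-q^{n_1}$. This cancels the top factor of $(q;q)_{n_1}$. Shifting $n_1\mapsto n_1+1$ and expanding $\binom{n_1+1}{2}=\binom{n_1}{2}+n_1$ and $2(n_1+1)n_2$, I expect to obtain $xq\,F_{1,4}(xq)$.
\item For $F_{1,2}-F_{1,4}$, the factor $1-q^{2n_2}$ cancels against $(q^2;q^2)_{n_2}$. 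Shifting $n_2\mapsto n_2+1$ should produce $x^2yq^2F_{1,2}(xq^2)$, using $4\binom{n_2+1}{2}=4\binom{n_2}{2}+4n_2$ together with the cross term $2n_1$.
\end{itemize}
Combined with Step 2, these three identities prove all three formulas at once.

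The main obstacle is the exponent bookkeeping in Step 3, in particular confirming that the linear terms match exactly. For the $n_2$-shift: after the shift, the remaining linear contribution of $q$ must equal the change $x\mapsto xq^2$, which adds $2n_1+4n_2$, plus the constant $2$. If a mismatch appears, I would first recheck the combinatorial recurrence in Step 1, in particular whether the translation by $2$ lands in $\sS^2$ or in $\sS^2_1$.
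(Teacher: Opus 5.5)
Your proof is correct (reading $4\binom{n_2}{4}$ as $4\binom{n_2}{2}$, as the paper's own $H(\beta_1,\beta_2)$ does). It reaches the result by a different and shorter combinatorial route than the paper's.

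\textbf{The paper's route.} It realizes $\sS^2$ as a span one linked partition ideal of modulus $2$ with six blocks, which gives the $6\times 6$ system $F(x)=\rA_{\sS^2}\rW_{\sS^2}F(xq^2)$. It then applies Chern's recurrence four times (two rounds in the order $(2,1)$) until every leaf is $H(3,6)$, $H(3,8)$ or $H(4,8)$, i.e.\ until everything closes under $x\mapsto xq^2$.

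\textbf{Your route.} You delete the copies of the part $1$ and translate by $1$ or $2$, which gives a $3$-state system with mixed shifts. Your two difference computations are exactly the first two nodes of the paper's tree, $H(1,2)=H(1,4)+x^2yq^2H(3,6)$ and $H(1,4)=H(2,4)+xqH(2,6)$. The trivial identity $H(2,4)(x)=H(1,2)(xq)$ then closes the system at once.

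The bookkeeping you were worried about works out. After $n_1\mapsto n_1+1$ the linear part is $2n_1+6n_2+1$, and after $n_2\mapsto n_2+1$ it is $3n_1+6n_2+2$. Deleting $[1]_2$ and translating by $2$ lands in all of $\sS^2$.

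Your route also extends verbatim to every $k$. Deleting $[1]_i$ from a partition of $\sS^k$ and translating by $i$ returns the family with at most $i$ ones. The resulting system is solved by $H(1,\dots,i,2(i+1),\dots,2k)$, using one application of Chern's recurrence at coordinate $i$ per equation, and this gives Theorem~\ref{thm:Ak} analytically. What the paper's route buys instead is the structural fact that $\sS^2$ is a linked partition ideal (the stated aim of Section~\ref{sec:set-23}). It also gives a mechanical transfer-matrix procedure that does not depend on finding, by hand, a recursion that closes.

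One justification in Step~2 needs repair. The equation $G_0(x)=G_2(xq)$ carries no factor of $x$, so ``the shifted terms carry factors $x$ or $x^2$'' is not why the system determines its solution. Write $g_{i,m}$ for the coefficient of $x^m$ in $G_i$. The three equations combine to $(1-q^m)g_{2,m}=q^m g_{1,m-1}+yq^{2m-2}g_{2,m-2}$. Uniqueness then comes from the invertibility of $1-q^m$ in $\mathbb{Z}[y]\llbracket q\rrbracket$ for $m\geq 1$, with the $m=0$ coefficients fixed by $G_i(0)=1$.
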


\begin{theorem}\label{thm:Ak=3}
Let $\sS^3_{i, j}$ be the set of partitions in $\sS^3$ where at most $i$ of the parts are equal to $1$ and at most $j$ of the parts are equal to $2$ for $0\leq i, j\leq 3$ (note that $\sS^3_{3, 3}=\sS^3$). Then we have
\begin{align}
\sum_{\la\in\sS^3}x^{\ell(\la)}y^{\ell_2(\la)}z^{\ell_3(\la)}q^{|\la|}=\sum_{n_1, n_2, n_3\geq 0}&\frac{x^{n_1+2n_2+3n_3}y^{n_2}z^{n_3}}{(q; q)_{n_1}(q^2; q^2)_{n_2}(q^3; q^3)_{n_3}}\nonumber\\
&\times q^{\binom{n_1}{2}+4\binom{n_2}{2}+9\binom{n_3}{2}+2n_1n_2+3n_1n_3+6n_2n_3+n_1+2n_2+3n_3},\\
\sum_{\la\in\sS^3_{2, 3}}x^{\ell(\la)}y^{\ell_2(\la)}z^{\ell_3(\la)}q^{|\la|}=\sum_{n_1, n_2, n_3\geq 0}&\frac{x^{n_1+2n_2+3n_3}y^{n_2}z^{n_3}}{(q; q)_{n_1}(q^2; q^2)_{n_2}(q^3; q^3)_{n_3}}\nonumber\\
&\times q^{\binom{n_1}{2}+4\binom{n_2}{2}+9\binom{n_3}{2}+2n_1n_2+3n_1n_3+6n_2n_3+n_1+2n_2+6n_3},\\
\sum_{\la\in\sS^3_{1, 3}}x^{\ell(\la)}y^{\ell_2(\la)}z^{\ell_3(\la)}q^{|\la|}=\sum_{n_1, n_2, n_3\geq 0}&\frac{x^{n_1+2n_2+3n_3}y^{n_2}z^{n_3}}{(q; q)_{n_1}(q^2; q^2)_{n_2}(q^3; q^3)_{n_3}}\nonumber\\
&\times q^{\binom{n_1}{2}+4\binom{n_2}{2}+9\binom{n_3}{2}+2n_1n_2+3n_1n_3+6n_2n_3+n_1+4n_2+6n_3},\\
\sum_{\la\in\sS^3_{0, 3}}x^{\ell(\la)}y^{\ell_2(\la)}z^{\ell_3(\la)}q^{|\la|}=\sum_{n_1, n_2, n_3\geq 0}&\frac{x^{n_1+2n_2+3n_3}y^{n_2}z^{n_3}}{(q; q)_{n_1}(q^2; q^2)_{n_2}(q^3; q^3)_{n_3}}\nonumber\\
&\times q^{\binom{n_1}{2}+4\binom{n_2}{2}+9\binom{n_3}{2}+2n_1n_2+3n_1n_3+6n_2n_3+2n_1+4n_2+6n_3},\\
\sum_{\la\in\sS^3_{0, 2}}x^{\ell(\la)}y^{\ell_2(\la)}z^{\ell_3(\la)}q^{|\la|}=\sum_{n_1, n_2, n_3\geq 0}&\frac{x^{n_1+2n_2+3n_3}y^{n_2}z^{n_3}}{(q; q)_{n_1}(q^2; q^2)_{n_2}(q^3; q^3)_{n_3}}\nonumber\\
&\times q^{\binom{n_1}{2}+4\binom{n_2}{2}+9\binom{n_3}{2}+2n_1n_2+3n_1n_3+6n_2n_3+2n_1+4n_2+9n_3},\\
\sum_{\la\in\sS^3_{0, 0}}x^{\ell(\la)}y^{\ell_2(\la)}z^{\ell_3(\la)}q^{|\la|}=\sum_{n_1, n_2, n_3\geq 0}&\frac{x^{n_1+2n_2+3n_3}y^{n_2}z^{n_3}}{(q; q)_{n_1}(q^2; q^2)_{n_2}(q^3; q^3)_{n_3}}\nonumber\\
&\times q^{\binom{n_1}{2}+4\binom{n_2}{2}+9\binom{n_3}{2}+2n_1n_2+3n_1n_3+6n_2n_3+3n_1+6n_2+9n_3}.
\end{align}
\end{theorem}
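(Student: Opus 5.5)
We follow the span one linked partition ideal strategy: encode each partition of $\sS^3$ by the multiplicities of the parts $1,2,\dots$, derive functional equations for generating functions indexed by the multiplicity pattern at the bottom, and check that the six series on the right-hand sides satisfy the same equations.

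\textbf{Setup.} Write $\la\in\sS^3$ as $1^{f_1}2^{f_2}\cdots$ with $f_m\in\{0,1,2,3\}$. The gap-frequency condition is local with span three, since a part with multiplicity $3$ forbids parts $a+1$ and $a+2$. It reads as follows: if $f_a=i>0$, then $f_{a+1}=\cdots=f_{a+i-1}=0$ and $f_{a+i}\le i$. Hence the admissible configurations on $\{1,2\}$ are determined by the restrictions ``at most $i$ ones and at most $j$ twos''. For $(i,j)$ in the listed index set, define
\begin{align*}
G_{i,j}(x,y,z;q)=\sum_{\la\in\sS^3_{i,j}}x^{\ell(\la)}y^{\ell_2(\la)}z^{\ell_3(\la)}q^{|\la|}.
\end{align*}
Removing the part $1$ (with its multiplicity $f_1$) and subtracting $1$ from every remaining part gives recurrences. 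Subtracting $1$ from all parts replaces $q$ by $q$ and $x$ by $xq$. Each value of $f_1$ produces a multiplicity-dependent restriction on the new smallest parts:
\begin{align*}
G_{3,3}&=G_{0,3}(xq)+xq\,G_{1,3}(xq)+x^2yq^2\,G_{0,2}(xq)+x^3zq^3\,G_{0,0}(xq),\\
G_{0,3}(x)&=G_{3,3}(xq),\qquad G_{0,0}(x)=G_{0,0}\text{-shift},
\end{align*}
and similarly for the others. The recurrences for the ``zero ones'' cases come from the simple shift; for $f_1=2,3$ the constraint forces $f_2=0$ and, respectively, $f_3\le 2$ or $f_4\le 3$ with $f_2=f_3=0$. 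One may need to enlarge the family slightly, for instance adding ``$f_1=f_2=0$, $f_3\le 2$''. The family indexed in the theorem is exactly chosen to be closed, since $\sS^3_{0,2}$ and $\sS^3_{0,0}$ appear as the targets after a shift. I would carefully verify closure first, and iterate shifts as needed until the system closes. Together with the initial conditions $G(0)=1$, this system determines all $G_{i,j}$ uniquely as power series in $x$.

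\textbf{Verification.} Let $F_{\mathbf c}(x)$ denote the right-hand side sum with linear exponent $c_1n_1+c_2n_2+c_3n_3$. In that notation the six cases correspond to
\begin{align*}
\mathbf c\in\{(1,2,3),(1,2,6),(1,4,6),(2,4,6),(2,4,9),(3,6,9)\}.
\end{align*}
Note that $F_{\mathbf c}(xq)=F_{\mathbf c+(1,2,3)}(x)$, since $x^{n_1+2n_2+3n_3}$ carries this weight. The key identities are the standard ones, obtained by splitting $1/(q^k;q^k)_{n}$ via $(1-q^{kn})$:
\begin{align*}
F_{\mathbf c}-F_{\mathbf c+(0,0,3)}&=x^3zq^{c_3}\,F_{\mathbf c+(3,6,9)\text{-type shift}},
\end{align*}
and analogously for the second and first coordinates. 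The decrement $n_3\mapsto n_3-1$ in the quadratic form yields the cross terms $3n_1+6n_2+9(n_3-1)$. Chaining these three telescoping identities produces exactly the four-term recurrence for $G_{3,3}$, and the shorter ones for the other cases. Matching shows that the $F$'s satisfy the same system, so $F=G$ by uniqueness.

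\textbf{Main obstacle.} The hardest step is the bookkeeping: closing the system of restricted classes, and making sure every class that arises is one of the six listed, or is expressible as a difference of them. If an extra class, such as $f_1=0,f_2\le 1$, appears, I would express it as a linear combination via the telescoping identities rather than add a new series. The theorem for $\sS^2$ (Theorem~\ref{thm:Ak=2}) is the model case.
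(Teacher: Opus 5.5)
Your argument is correct once two slips are fixed, and it takes a genuinely different and shorter route than the paper.

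\textbf{The slips.} In your equation for $G_{3,3}$, the $f_1=0$ term should be $G_{0,3}(x)=G_{3,3}(xq)$, not $G_{0,3}(xq)$. The missing equation is $G_{0,0}(x)=G_{0,3}(xq)$.

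\textbf{Closure.} You do not need to enlarge the family. A part $a$ of multiplicity $m$ constrains only $f_{a+1},\dots,f_{a+m}$. So after stripping the $1$'s, the pending constraint is always one of the six listed classes, and the system closes as
\begin{align*}
G_{3,3}(x)&=G_{2,3}(x)+x^3zq^3G_{0,0}(xq), & G_{2,3}(x)&=G_{1,3}(x)+x^2yq^2G_{0,2}(xq),\\
G_{1,3}(x)&=G_{0,3}(x)+xq\,G_{1,3}(xq), & G_{0,3}(x)&=G_{3,3}(xq),\\
G_{0,2}(x)&=G_{2,3}(xq), & G_{0,0}(x)&=G_{0,3}(xq).
\end{align*}

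\textbf{Matching.} Match $G_{3,3},G_{2,3},G_{1,3},G_{0,3},G_{0,2},G_{0,0}$ with $H(1,2,3)$, $H(1,2,6)$, $H(1,4,6)$, $H(2,4,6)$, $H(2,4,9)$, $H(3,6,9)$ respectively (your $F_{\mathbf c}$).
\begin{itemize}
\item The last three equations are just the shift $F_{\mathbf c}(xq)=F_{\mathbf c+(1,2,3)}(x)$.
\item The first three are single applications of Lemma~\ref{lem:key rec} in coordinates $3$, $2$, $1$ respectively. For example, $H(1,2,3)=H(1,2,6)+x^3zq^3H(4,8,12)$, where $H(4,8,12)$ is $H(3,6,9)$ evaluated at $xq$.
\end{itemize}
With $H(\mathbf c)|_{x=0}=1$, uniqueness finishes the proof as you say.

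\textbf{Comparison with the paper.} The paper cuts each partition into blocks of width $T=3$. This gives $18$ block types and an $18\times 18$ system $F(x)=\rA_{\sS^3}\rW_{\sS^3}F(xq^3)$. It then verifies the system with a binary tree of nine successive applications of the key recurrence (three rounds of the order $(3,2,1)$), in which the boxed right children must be recognized as recurring leftmost nodes.

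You instead condition on the multiplicity of the smallest part, in the spirit of Andrews' proof of Gordon's theorem. Your states are pending constraints rather than previous blocks, the shift is $x\mapsto xq$, and the system is $6\times 6$, indexed by exactly the six classes of the theorem.

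What the paper's route buys: it realizes $\sS^3$ literally as a span one linked partition ideal, which is the paper's stated aim. With width-one blocks the linking would have span two, since a $2$-tuple at $a$ constrains $a+2$. It also lets the paper invoke Chern's general existence and uniqueness result.

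What your route buys: it is much shorter, and it explains why precisely these six companions appear. It also extends to every $k$. The classes $f_1=\cdots=f_{s-1}=0$, $f_s\le i$ with $1\le s\le i\le k$ are closed under the same recursion. They correspond to $H(\beta)$ with $\beta_j=sj$ for $j\le i$ and $\beta_j=(s+1)j$ for $j>i$. This gives an analytic proof of Theorem~\ref{thm:Ak} together with $\binom{k+1}{2}$ companions.
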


Lastly, we turn our attention to the combinatorics of partition set $\sS^k$. Utilizing a more general base-increments framework (for this framework, see also \cite{FL241}), we present a bijective proof of its generating function, which further preserves a family of additional partition statistics.

\begin{theorem}\label{thm:Ak}
For an integer $k\geq 2$, we have
\begin{align}
\sum_{\la\in\sS^k}x_1^{\ell(\la)}(\prod_{i=2}^kx_{i}^{\ell_i(\la)})q^{|\la|}=\sum_{n_1, ..., n_k\geq 0}&\frac{x_1^{\sum_{i=1}^k in_i}(\prod_{i=2}^kx_i^{n_i})q^{\sum_{i=1}^k(i^2\binom{n_i}{2}+in_i)+\sum_{1\leq i<j\leq k}ijn_in_j}}{(q; q)_{n_1}(q^2; q^2)_{n_2}\cdots (q^k; q^k)_{n_k}}.\label{id:Ak}
\end{align}
\end{theorem}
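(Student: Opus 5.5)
Our plan is a bijective proof in the base-increments framework. We interpret the right-hand side of \eqref{id:Ak} combinatorially. For each $i$, write $1/(q^i;q^i)_{n_i}$ as the generating function of partitions $\mu^{(i)}$ into at most $n_i$ parts, each part a multiple of $i$. Equivalently, $\mu^{(i)}$ is a weakly increasing sequence $0\le b^{(i)}_1\le\cdots\le b^{(i)}_{n_i}$ of nonnegative multiples of $i$. The exponent
\[
\sum_{i=1}^k\Big(i^2\binom{n_i}{2}+in_i\Big)+\sum_{1\le i<j\le k}ijn_in_j
\]
is exactly the weight of a \emph{base partition} $\beta(n_1,\dots,n_k)$, namely the minimal element of $\sS^k$ having $n_i$ distinct values of multiplicity $i$. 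We take $\beta$ to consist of $n_k$ blocks of $k$ equal parts, then $n_{k-1}$ blocks of $k-1$ equal parts, and so on down to $n_1$ single parts, arranged with the largest multiplicities at the smallest values. Each block of multiplicity $i$ sits exactly $i$ above the previous block of multiplicity $i$; within the order we fix, this is the tightest spacing the gap-frequency rule permits. We verify the weight formula for $\beta$ by a direct count. A block of multiplicity $i$ whose value equals $1$ plus $i$ times the number of earlier $i$-blocks, plus $j$ times the number of $j$-blocks with $j>i$, contributes the terms $in_i$, $i^2\binom{n_i}{2}$ and $ijn_in_j$. The $x_1$- and $x_i$-weights then match automatically, because $\ell(\la)=\sum in_i$ and $\ell_i(\la)=n_i$ are preserved.

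The bijection adds increments to the base. Given $\beta$ and sequences $\mu^{(i)}$, we move blocks to the right in a prescribed order. First, for the largest-value blocks of each multiplicity class, we add the increments $b^{(i)}_{n_i},b^{(i)}_{n_i-1},\dots$ to the $i$-blocks from the top down. Adding a multiple of $i$ to a block of $i$ equal parts raises the weight by $i\cdot(\text{multiple of } i)$, which is what the base $q^i$ in $(q^i;q^i)_{n_i}$ records.

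The difficulty is that blocks of different multiplicities must overtake one another. An $i$-block moved past a $j$-block changes the gap requirements, and the partition must remain in $\sS^k$ throughout. We resolve this with explicit \emph{crossing moves}. When an $i$-block at value $a$ is pushed past a neighboring $j$-block, we rearrange the local configuration so that the total weight gain is exactly the requested increment, with the ``$ij$'' interaction absorbed by the base term. The model is the adjacent pair $a^i$ followed by $(a+i)^j$ or $(a+\max(i,j))^j$: we must show that each unit step of motion for the $i$-block corresponds to a weight change of $i$. A key lemma will state that, after all blocks have been placed, the sorted configuration is gap-frequency. Its proof will use the condition that if the next part is exactly $a+i$, its multiplicity is at most $i$; this condition is precisely what allows the $j$-block with $j\le i$ to sit flush.

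The inverse map is defined by reading $\la\in\sS^k$ from the largest part downward. We record each maximal run of multiplicity $i$ and subtract the minimal admissible position of its block, which recovers the increments and also $n_i=\ell_i(\la)$ (with $n_1$ counting parts of multiplicity one). We must check that the recovered increments are weakly increasing multiples of $i$.

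We expect the main obstacle to be the interaction rules between blocks of different sizes. The specific point to settle is whether to order the base with large multiplicities first, and we must prove that the greedy insertion order makes the increment sequences monotone and the map invertible. Our first attempt will be to treat the case $k=2$ in detail, compare it with K\i l\i\c{c}'s construction, and then generalize by induction on the maximal multiplicity.
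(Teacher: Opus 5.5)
\textbf{Verdict: there is a genuine gap.} Your base partition is exactly the paper's $\beta^{\bn}$, and your weight count for it is correct. But the proposal stops where the proof has to begin: the bijection is never constructed.

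\textbf{What is missing from the construction.} The crossing moves are described only by the property you want them to have. The order in which multiplicity classes are processed is left unspecified, and you even list the ordering of the base as still to be settled. The proposed inverse is not well defined. For instance, take $k=2$, $\bn=(1,1)$ and base $1+1+3$. The partition $\la=1+3+3\in\sS^2_{(1,1)}$ has its $2$-block \emph{above} its $1$-block, and its single part $1$ sits below its base position $3$. Subtracting base positions block by block gives the $1$-block the increment $1-3<0$. Whatever ``minimal admissible position'' is meant to be, the inverse has to undo crossings explicitly rather than read increments off positions. Likewise, ``place the blocks, then sort'' does not by itself rule out two blocks landing on the same value. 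That would merge them into a part of multiplicity $i+j$ and destroy the statistics $\ell_i$, and it could also create new gap violations.

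\textbf{The missing idea.} You need a concrete weight-preserving exchange, together with a processing order that makes it the only obstruction ever met.
\begin{itemize}
\item \emph{Order.} In the paper, classes are handled in the order $i=1,2,\dots,k$. Within class $i$, blocks are moved from the top block down, the top block receiving the largest part of $\mu^i$. So a moving $i$-block only ever meets already placed blocks of strictly smaller multiplicity.
\item \emph{Exchange.} When a forward move produces $[t+1]_i+[t+i]_m$ with $m<i$, this is replaced by $[t]_m+[t+m+1]_i$. The $m$-block drops by $i$ and the $i$-block jumps by $m$, so the weight is unchanged ($-mi+im=0$). This is the precise sense in which the $ij\,n_in_j$ term is ``absorbed''. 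The exchange is repeated while the $i$-block is again followed at distance $i-1$ by a block of smaller multiplicity.
\item \emph{Inverse.} It uses backward moves and the reverse exchange $[t]_m+[t+m]_i\mapsto[t]_i+[t+i]_m$. It handles $i=k,\dots,1$, and each class from the bottom block up. On $1+3+3$ this gives $[1]_1+[2]_2\mapsto[1]_2+[3]_1$, i.e.\ $\mu^2=2$ and $\mu^1$ empty, which is what the forward map sends to $1+3+3$.
\end{itemize}

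\textbf{Increment bookkeeping.} Your weight accounting for the increments is off by a factor of $i$. Adding a multiple $b$ of $i$ to each of $i$ equal parts raises the weight by $ib$, which would yield $1/(q^{i^2};q^{i^2})_{n_i}$. The block must instead move $b/i$ unit steps of weight $i$ each, as you correctly say later.
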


This paper is organized as follows. Section \ref{sec:pre} reviews the framework of span one linked partition ideals and presents an equivalent definition of $\sS^k$. In Section \ref{sec:set-23}, we study the partition sets $\sS^2$ and $\sS^3$, and prove Theorems \ref{thm:Ak=2} and \ref{thm:Ak=3}. Next, a purely bijective proof of Theorem \ref{thm:Ak} is given in Section \ref{sec:com-k}. Finally, we conclude with some remarks in Section \ref{sec:con}.

%%%%%%%%%%%%%%%%%%%%%%%%%%%%%%%%%%%
\section{Preliminaries}\label{sec:pre}

In this section, we begin by introducing span one linked partition ideals and establishing a key recurrence relation, followed by an equivalent definition of $\sS^k$.

%%%%%%%%%%%%%%%%%%%%%%%%
\subsection{Span one linked partition ideals and a key recurrence relation} The notion of linked partition ideals was initially introduced by Andrews \cite{and741}; see \cite[Chapter 8]{andtp} for details. Much later, Chern \cite{CL20} revisited this framework and significantly extended it, yielding partition-theoretic interpretations for several novel $q$-series identities. Further developments along these lines have been pursued in \cite{AC23, ACL22, che20, che221, che222, che23, GY251, GY252}. We next provide a brief review of span one linked partition ideals.

\begin{Def}[{\cite[Definition 2.1]{ACL22}}]\label{def:LPI}
    Assume that we are given 
    \begin{itemize}
        \item[(1)] a finite set $\Pi=\{\pi_1, \pi_2, ..., \pi_K\}$ of integer partitions with $\pi_1=\ep$, the empty partition,

        \item[(2)] a {\it map of linking sets}, $\cL:\Pi\ri P(\Pi)$, the power set of $\Pi$, with especially, $\cL(\pi_1)=\cL(\ep)=\Pi$ and $\pi_1=\ep\in\cL(\pi_k)$ for any $1\leq k\leq K$,

        \item[(3)] and a positive integer $T$, called the {\it modulus}, which is greater than or equal to the largest part among all partitions in $\Pi$.
    \end{itemize}
    We say a {\it span one linked partition ideal} $\sI=\sI(\langle \Pi, \cL\rangle, T)$ is the collection of all partitions of the form
    \begin{align}
        \la&=\phi^0(\la_0)\oplus\phi^T(\la_1)\oplus\cdots\oplus\phi^{NT}(\la_N)\oplus\phi^{(N+1)T}(\pi_1)\oplus\phi^{(N+2)T}(\pi_1)\oplus\cdots\nonumber\\
        &=\phi^0(\la_0)\oplus\phi^T(\la_1)\oplus\cdots\oplus\phi^{NT}(\la_N),\label{id:decom}
    \end{align}
    where $\la_i\in\cL(\la_{i-1})$ for each $i$ and $\la_N$ is not the empty partition. We also include in $\sI$ the empty partition, which corresponds to $\phi^0(\pi_1)\oplus\phi^T(\pi_1)\oplus\cdots$. Here for any two partitions $\mu$ and $\nu$, $\mu\oplus \nu$ gives a partition by collecting all parts in $\mu$ and $\nu$, and $\phi^m(\mu)$ gives a partition by adding $m$ to each part of $\mu$. Further, $\phi^m(\ep)=\ep$ for any $m$.
\end{Def}

\begin{example}
Let $\sR$ be the set of Rogers-Ramanujan partitions, that is, partitions such that every two consecutive parts have difference at least $2$. Then we know that $\sR=\sI(\langle \Pi_{\sR}, \cL_{\sR}\rangle, 2)$ where $\Pi_{\sR}=\{\ep, 1, 2\}$ and the linking sets are
\begin{align*}
\cL_{\sR}(\ep)=\{\ep, 1, 2\},\quad \cL_{\sR}(1)=\{\ep, 1, 2\},\quad \cL_{\sR}(2)=\{\ep, 2\}.
\end{align*}
This follows from the fact that we can decompose any Rogers-Ramanujan partition as in \eqref{id:decom}. For example, we have
\begin{align*}
\la&=1+3+6+10+15\\
&=\phi^0(1)\oplus\phi^2(1)\oplus\phi^4(2)\oplus\phi^6(\ep)\oplus\phi^8(2)\oplus\phi^{10}(\ep)\oplus\phi^{12}(\ep)\oplus\phi^{14}(1).
\end{align*}
Note that $1\notin \cL_{\sR}(2)$. This is because if $1\in \cL_{\sR}(2)$, then for some $i$ we would have $\phi^{2i}(2)\oplus\phi^{2i+2}(1)$ destroying the gap condition in Rogers-Ramanujan partitions.
\end{example}

On the other hand, Chern \cite{che20} introduced a key recurrence relation for a family of $q$-multi-summations, which he later refined in \cite{che221}. Let $R$ be a fixed positive integer. We fix a symmetric matrix $\ual=(\alpha_{i, j})\in \mat_{R\times R}(\bN)$ and a vector $\uA=(A_r)\in\bN^R_{>0}$. We also fix $J$ vectors $\uga_j=(\ga_{j, r})\in\bN^R_{\geq 0}$ for $j=1, 2, ..., J$. Let $x_1, x_2, ..., x_J$ and $q$ be indeterminates such that the following $q$-multi-summation $H(\ubeta)=H(\beta_1, \beta_2, ..., \beta_R)$ for $\ubeta\in\bZ^R$ converges:
\begin{align}
    H(\ubeta):=\sum_{n_1, ..., n_R\geq 0}\frac{(\prod_{j=1}^Jx_j^{\sum_{r=1}^R\ga_{j, r}n_r})q^{\sum_{r=1}^R\al_{r, r}\binom{n_r}{2}+\sum_{1\leq i<j\leq R}\al_{i, j}n_in_j+\sum_{r=1}^R\beta_rn_r}}{(q^{A_1}; q^{A_1})_{n_1}\cdots (q^{A_R}; q^{A_R})_{n_R}}.\label{id:q-multi}
 %x_1^{\sum_{r=1}^R\ga_{1, r}n_r}\cdots x_J^{\sum_{r=1}^R\ga_{J, r}n_r}
\end{align}
Note that if $F(x_1)=H(\beta_1, ..., \beta_R)$, then $F(x_1q^T)=H(\beta_1+\ga_{1, 1}T, ..., \beta_R+\ga_{1, R}T)$.
\begin{lemma}[{\cite[Lemma 2.1]{che221}}]\label{lem:key rec}
    For $1\leq r\leq R$, we have
    \begin{align}
        H(\beta_1, ..., \beta_r, ..., \beta_R)=&H(\beta_1, ..., \beta_r+A_r, ..., \beta_R)\nonumber\\
        &+x_1^{\ga_{1, r}}\cdots x_J^{\ga_{J, r}}q^{\beta_r}H(\beta_1+\al_{r, 1}, ..., \beta_r+\al_{r, r}, ..., \beta_R+\al_{r, R}).\label{id:key rec}
    \end{align}
\end{lemma}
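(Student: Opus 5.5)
The plan is to split the sum defining $H(\ubeta)$ according to the factor $1/(q^{A_r};q^{A_r})_{n_r}$. The tool is the elementary identity
\begin{align*}
\frac{1}{(q^{A};q^{A})_{n}}=\frac{q^{An}}{(q^{A};q^{A})_{n}}+\frac{1-q^{An}}{(q^{A};q^{A})_{n}}
=\frac{q^{An}}{(q^{A};q^{A})_{n}}+\frac{1}{(q^{A};q^{A})_{n-1}},
\end{align*}
valid for $n\geq 1$. For $n=0$ the second term is simply absent, since $1-q^{0}=0$. Apply this with $A=A_r$, $n=n_r$ inside the summand of $H(\beta_1,\dots,\beta_R)$, and split the sum into two pieces.

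The first piece has the extra factor $q^{A_rn_r}$. It is exactly $H(\beta_1,\dots,\beta_r+A_r,\dots,\beta_R)$, because the linear exponent $\beta_rn_r$ becomes $(\beta_r+A_r)n_r$ and nothing else changes.

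The second piece runs over $n_r\geq 1$ with denominator $(q^{A_r};q^{A_r})_{n_r-1}$. Substitute $n_r=m+1$ with $m\geq 0$ and track each part of the exponent:
\begin{itemize}
\item The $x$-part $\prod_j x_j^{\sum_s\ga_{j,s}n_s}$ acquires the factor $\prod_j x_j^{\ga_{j,r}}$.
\item The quadratic term becomes $\al_{r,r}\binom{m+1}{2}=\al_{r,r}\binom{m}{2}+\al_{r,r}m$.
\item Each cross term $\al_{i,r}n_in_r$ with $i\neq r$ becomes $\al_{i,r}n_im+\al_{i,r}n_i$. Because $\ual$ is symmetric, every $\al_{i,r}$ with $i<r$ and every $\al_{r,j}$ with $j>r$ can be written as $\al_{r,s}$.
\item The linear term becomes $\beta_rm+\beta_r$.
\end{itemize}
Collecting terms, the constant $q^{\beta_r}$ and $\prod_jx_j^{\ga_{j,r}}$ factor out. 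Every $n_s$ with $s\neq r$ gets its linear coefficient shifted to $\beta_s+\al_{r,s}$, and $m$ gets the coefficient $\beta_r+\al_{r,r}$. The second piece is therefore $x_1^{\ga_{1,r}}\cdots x_J^{\ga_{J,r}}q^{\beta_r}H(\beta_1+\al_{r,1},\dots,\beta_R+\al_{r,R})$, which is \eqref{id:key rec}.

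The only points needing care are the bookkeeping of the symmetric off-diagonal entries and the observation that the $n_r=0$ term contributes nothing to the second piece. Rearranging the sums is justified because we work formally, or in the assumed domain of convergence.
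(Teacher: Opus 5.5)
Your argument is correct. You split $1/(q^{A_r};q^{A_r})_{n_r}$ as $q^{A_rn_r}/(q^{A_r};q^{A_r})_{n_r}+1/(q^{A_r};q^{A_r})_{n_r-1}$, shift $n_r\mapsto n_r+1$ in the second piece, and use the symmetry of $(\al_{i,j})$ to collect the cross terms; this yields exactly the stated recurrence. The paper gives no proof of its own and only cites Chern's Lemma 2.1; your computation is the standard argument for that lemma, and I believe it is the one Chern uses.
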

As described in \cite{che221}, the recurrence relation \eqref{id:key rec} can be illustrated by a binary tree as shown in Figure \ref{fig:rec}, where the coordinate on which the recurrence acts is denoted by $\check{\beta}_r$.

\begin{figure}[htbp]
\begin{center}
\begin{tikzpicture}[
  level 1/.style={level distance=1.5cm, sibling distance=8cm}, % 调整子节点间距和层级距离
  every node/.style={align=center}, % 自动居中对齐，无需parbox的centering
  edge from parent/.style={thick, draw} % 统一边的样式
]
  \node (root) {\parbox{4cm}{\(H(\beta_1, \dots, \check{\beta}_r, \dots, \beta_R)\)}}
    child {
      node {\parbox{5cm}{\(H(\beta_1, \dots, \beta_r + A_r, \dots, \beta_R)\)}}
      edge from parent node[left, midway] {1} % 使用midway确保标签在中间
    }
    child {
      node {\parbox{7cm}{\(H(\beta_1 + \alpha_{r,1}, \dots, \beta_r + \alpha_{r,r}, \dots, \beta_R + \alpha_{r,R})\)}}
      edge from parent node[right, midway] {\(x_1^{\gamma_{1, r}} \cdots x_J^{\gamma_{J, r}} q^{\beta_r}\)}
    };
\end{tikzpicture}
\end{center}
\caption{Node $H(\beta_1, ..., \check{\beta}_r, ..., \beta_R)$ and its children.}
\label{fig:rec}
\end{figure}

%%%%%%%%%%%%%%%%%%%%%%%%%%%%%%
\subsection{An equivalent definition of $\sS^k$} Given a partition $\la\in \sP$ and a positive integer $a$, if exactly $i$ parts of $\la$ are equal to $a$, then we can define the $i$-tuple $$[a]_i=\underbrace{a+a+\cdots +a}_{i}.$$ Thus, every partition can be expressed as a union of such $i$-tuples, where $i$ is the multiplicity of the corresponding part. Recall that $\sS^k$ denotes the set of gap-frequency partitions where each part appears at most $k$ times. Next, we consider the sets $\sS^1$ and $\sS^2$.

\begin{enumerate}[(1)]
\item For $\sS^1$, it is readily seen to be precisely the set of partitions into distinct parts. Equivalently, every $\la\in \sS^1$ is composed of only $1$-tuples, and for any two consecutive tuples $[a]_1$ and $[b]_1$ appearing in $\la$ with $a<b$, the difference condition $b-a\geq 1$ is automatically satisfied.

%For any $\la\in\sS^1$, we know that $\la$ consists of some $1$-tuples. Then $\sS^1$ is a set of partitions where the difference condition $b-a\geq 1$ must be satisfied for any two consecutive distinct $1$-tuples $[a]_1$ and $[b]_1$.

\item For $\sS^2$, it is in fact the set $\sA_{3, 3}$ appearing in Theorem \ref{thm:Gid}. Hence, every $\la\in\sS^2$ is composed of only $1$-tuples and $2$-tuples. Let $[a]_i$ and $[b]_j$ be two consecutive tuples in $\la$ with $a<b$ where $i, j\in \{1, 2\}$. The required difference $(b-a)$ is summarised in the following matrix:
\begin{align*}
\begin{pNiceMatrix}[first-row, first-col, nullify-dots, columns-width=auto]
 & [b]_1 & [b]_2\\
 [a]_1 & 1 & 2\\
 [a]_2 & 2 & 2
\end{pNiceMatrix},
\end{align*}
each entry of which gives the minimal allowed difference between $a$ and $b$.
\end{enumerate}

Guided by the foregoing examples, we proceed to present an equivalent characterization of $\sS^k$ via explicit difference conditions. Recall that  partition $\la$ is called a {\it gap-frequency partition} if, whenever a part $a$ appears exactly $i$ times then the next strictly larger part $b$ is required to exceed $a$ by at least $i$; moreover, should the difference equal $i$, the number of times $b$ appears must not surpass $i$. In other word, if the number of occurrences of $b$ is less than or equal to $i$, then $b-a\geq i$; if the number of occurrences of $b$ exceeds $i$, then $b-a>i$. This motivates the following formal definition.

%is at least $a+i$ and if it is exactly $a+i$ then it can appear at most $i$ times. We assume that the next larger part after $a$ is $b$, then this condition will become that $b-a\geq i$ if the number of appearance of $b$ is less than or equal to $i$ and $b-a$ is more than $i$ if the number of appearance of $b$ is more than $i$. Therefore, we have the following definition.

\begin{Def}\label{def:sk}
For any integer $k\geq 1$,
% let $[a]_i$ and $[b]_j$ be two consecutive tuples in $\la\in \sS^k$ with $a<b$ where $i, j\in \{1, 2, ..., k\}$.
$\sS^k$ is the set of partitions $\la$ satisfying
\begin{enumerate}[(1)]
\item each part appears at most $k$ times;

\item the required gap between consecutive tuples is summarised in the matrix below:
\begin{align}
\begin{pNiceMatrix}[first-row, first-col, nullify-dots, columns-width=auto]
 & [b]_1 & [b]_2 & [b]_3 & \cdots & [b]_i & [b]_{i+1} & \cdots & [b]_k\\
 [a]_1 & 1 & 2 & 2 & \cdots & 2 & 2 & \cdots & 2\\
 [a]_2 & 2 & 2 & 3 & \cdots & 3 & 3 & \cdots & 3\\
 [a]_3 & 3 & 3 & 3 & \cdots & 4 & 4 & \cdots & 4\\
 \vdots\ \ \ & \vdots & \vdots & \vdots & &\vdots &\vdots & & \vdots\\
 [a]_i & i & i & i & \cdots & i & i+1 & \cdots & i+1\\
 [a]_{i+1} & i+1 & i+1 & i+1 & \cdots & i+1 & i+1 & \cdots & i+2\\
 \vdots\ \ \ & \vdots & \vdots & \vdots & &\vdots &\vdots & & \vdots\\
 [a]_k & k & k & k & \cdots & k & k & \cdots & k
\end{pNiceMatrix},\label{mat:gap}
\end{align}
where the $(i, j)$-entry gives the minimal allowed value of $(b-a)$ for consecutive tuples $[a]_i$ and $[b]_j$ with $a<b$; equivalently,
\begin{align}
b-a\begin{cases}
\geq i, & \text{ if }j\leq i,\\
\geq i+1, & \text{ if }j>i.
\end{cases}
\end{align}
%each entry gives the minimal allowed difference between $a$ and $b$.
\end{enumerate}
\end{Def}

%%%%%%%%%%%%%%%%%%%%%%%%%%%%%%%%%%%%%%%%%%%%%%%%%%%%%%%%
\section{Partition sets $\sS^2$ and $\sS^3$}\label{sec:set-23}

In this section, the partition sets $\sS^2$ and $\sS^3$ are embedded into the framework of span one linked partition ideals. Leveraging the key recurrence relation \eqref{id:key rec}, we then establish Theorems \ref{thm:Ak=2} and \ref{thm:Ak=3}.

%Now we first consider the following partition set where $k=3$. Let $\sR_{3,a}$ be the set of partitions $\la$ where $\la_{i+2}-\la_i\geq 2$ and the number of $1$'s is at most $(a-1)$. And denote by $\sP_{S_m}$ the set of partitions in $\sP$ whose $m$-tail is not in the set $S_m$ of partitions. Note that these identities are due to Andrews. Let $\sE_k$ denote the set of partitions into multiples of $k$ and set $\sE_{k,n}:=\{\la\in \sE_k: \ell(\la)\leq n\}$ for all positive integers $k\geq 1$. Note that if $k=1$, then we have $\sE_1=\sP$.
%\begin{theorem}[Andrews' series for Rogers-Ramanujan-Gordon $k=3$]\label{thm:A-RRG}
%    We have
%    \begin{align}
%\sum_{\la\in\sR_{3,3}}x^{\ell(\la)}q^{|\la|}&=\sum_{n_1, n_2\geq 0}\frac{x^{n_1+2n_2}q^{\binom{n_1}{2}+4\binom{n_2}{4}+2n_1n_2+n_1+2n_2}}{(q; q)_{n_1}(q^2; q^2)_{n_2}},\\
%\sum_{\la\in\sR_{3,2}}x^{\ell(\la)}q^{|\la|}&=\sum_{n_1, n_2\geq 0}\frac{x^{n_1+2n_2}q^{\binom{n_1}{2}+4\binom{n_2}{4}+2n_1n_2+n_1+4n_2}}{(q; q)_{n_1}(q^2; q^2)_{n_2}},\\
%\sum_{\la\in\sR_{3,1}}x^{\ell(\la)}q^{|\la|}&=\sum_{n_1, n_2\geq 0}\frac{x^{n_1+2n_2}q^{\binom{n_1}{2}+4\binom{n_2}{4}+2n_1n_2+2n_1+4n_2}}{(q; q)_{n_1}(q^2; q^2)_{n_2}}.
%    \end{align}
%\end{theorem}
%\begin{remark}
%Recently, K\i l\i \c{c} give the combinatorial of Theorem \ref{thm:A-RRG} based on the ``base+increments'' framework.
%\end{remark}

%%%%%%%%%%%%%%%%%%%%%%%%%%%%%%%%%%%
\subsection{Partition set $\sS^2$} Recall that $\sS^2$ is indeed the set $\sA_{3, 3}$, i.e., a set of partitions $\la$ where $\la_{j+2}-\la_j\geq 2$. To show that $\sS^2$ is a span one linked partition ideal, we present the following lemma.

%Next, we apply the framework of linked partition ideals to the set $\sR_{3, 3}$, that is, the set of partitions $\la$ where $\la_{i+1}-\la_{i}\geq 2$. For given a partition $\la\in \sP$, we denote the number of parts of $\la$ that are repeated exactly $k$ times as $\ell_k(\la)$. Then we have the following refined theorem.
%\begin{theorem}\label{thm:refine A-RRG}
   % We have
   % \begin{align}  \sum_{\la\in\sR_{3,3}}x^{\ell(\la)}y^{\ell_2(\la)}q^{|\la|}&=\sum_{n_1, n_2\geq 0}\frac{x^{n_1+2n_2}y^{n_2}q^{\binom{n_1}{2}+4\binom{n_2}{4}+2n_1n_2+n_1+2n_2}}{(q; q)_{n_1}(q^2; q^2)_{n_2}},\\
%\sum_{\la\in\sR_{3,2}}x^{\ell(\la)}y^{\ell_2(\la)}q^{|\la|}&=\sum_{n_1, n_2\geq 0}\frac{x^{n_1+2n_2}y^{n_2}q^{\binom{n_1}{2}+4\binom{n_2}{4}+2n_1n_2+n_1+4n_2}}{(q; q)_{n_1}(q^2; q^2)_{n_2}},\\
%\sum_{\la\in\sR_{3,1}}x^{\ell(\la)}y^{\ell_2(\la)}q^{|\la|}&=\sum_{n_1, n_2\geq 0}\frac{x^{n_1+2n_2}y^{n_2}q^{\binom{n_1}{2}+4\binom{n_2}{4}+2n_1n_2+2n_1+4n_2}}{(q; q)_{n_1}(q^2; q^2)_{n_2}}.
  %  \end{align}
%\end{theorem}

\begin{lemma}
The partition set $\sS^{2}$ is a span one linked partition ideal $\sI(\langle\Pi_{\sS^{2}}, \cL_{\sS^{2}}\rangle, 2)$ with $\Pi_{\sS^{2}}=\{\pi_1=\ep, \pi_2=1, \pi_3=1+1, \pi_4=1+2, \pi_5=2, \pi_6=2+2\}$ and
\begin{align*}
    \cL_{\sS^{2}}(\pi_1)&=\{\pi_1, \pi_2, \pi_3, \pi_4, \pi_5, \pi_6\},\\
    \cL_{\sS^{2}}(\pi_2)&=\{\pi_1, \pi_2, \pi_3, \pi_4, \pi_5, \pi_6\},\\
    \cL_{\sS^{2}}(\pi_3)&=\{\pi_1, \pi_2, \pi_3, \pi_4, \pi_5, \pi_6\},\\
    \cL_{\sS^{2}}(\pi_4)&=\{\pi_1, \pi_2, \pi_4, \pi_5, \pi_6\},\\
    \cL_{\sS^{2}}(\pi_5)&=\{\pi_1, \pi_2, \pi_4, \pi_5, \pi_6\},\\
    \cL_{\sS^{2}}(\pi_6)&=\{\pi_1, \pi_5, \pi_6\}.
\end{align*}
\end{lemma}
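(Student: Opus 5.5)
The plan is to prove the two inclusions between $\sS^2$ and $\sI=\sI(\langle\Pi_{\sS^2},\cL_{\sS^2}\rangle,2)$ by cutting the positive integers into blocks $\{2m+1,2m+2\}$, $m\ge 0$. For any partition $\la$ with every multiplicity at most $2$, let $\la_m$ be the partition obtained by taking the parts of $\la$ that lie in block $m$ and subtracting $2m$ from each. Then $\la_m$ is a partition with parts in $\{1,2\}$, each of multiplicity at most $2$, so a priori it is one of $\ep,1,1+1,2,2+2,1+2,1+1+2,1+2+2,1+1+2+2$. This gives a unique decomposition $\la=\phi^0(\la_0)\oplus\phi^2(\la_1)\oplus\cdots\oplus\phi^{2N}(\la_N)$ with $\la_N\ne\ep$, and this is the decomposition \eqref{id:decom} with $T=2$. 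Both inclusions therefore reduce to two checks: (a) the block contents of a partition in $\sS^2$ lie in $\Pi_{\sS^2}$, and (b) the constraints between consecutive blocks are exactly those encoded by $\cL_{\sS^2}$.

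For (a), I will use the gap matrix for $\sS^2$ stated in Section~\ref{sec:pre}. Inside one block the two parts differ by $1$, so the pairs $[1]_2,[2]_j$ and $[1]_1,[2]_2$ are forbidden, since they require difference $2$. This excludes $1+1+2$, $1+2+2$ and $1+1+2+2$, leaving exactly $\Pi_{\sS^2}$.

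For (b), note that a partition satisfies the gap conditions if and only if each pair of consecutive tuples does. Consecutive tuples within a single block have already been handled in (a). Tuples in nonadjacent nonempty blocks differ by at least $3\ge 2$, so they are automatically fine. The remaining case is two adjacent blocks $\la_{m-1}=\pi$ and $\la_m=\pi'$, where the relevant pair is the largest tuple of $\pi$ and the smallest tuple of $\pi'$. If one of the blocks is empty, there is no constraint, which matches $\ep\in\cL(\pi)$ for every $\pi$ and $\cL(\ep)=\Pi$. Otherwise I do a finite case check by the largest tuple of $\pi$:
\begin{itemize}
\item[$\bullet$] If the largest tuple of $\pi$ is $[1]_1$ or $[1]_2$ (that is, $\pi=\pi_2$ or $\pi_3$), then the smallest tuple of $\pi'$ sits at $3$ or $4$, so the difference is at least $2$ and every $\pi'$ is allowed.
\item[$\bullet$] If it is $[2]_1$ (that is, $\pi=\pi_4$ or $\pi_5$), the next tuple at $3$ must have multiplicity $1$, and anything at $4$ is fine. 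This forbids exactly $\pi'=\pi_3=1+1$.
\item[$\bullet$] If it is $[2]_2$ (that is, $\pi=\pi_6$), the next tuple must be at $4$ or beyond, so $\pi'\in\{\pi_1,\pi_5,\pi_6\}$.
\end{itemize}
These three cases reproduce the listed linking sets exactly. Conversely, any sequence of blocks obeying $\cL_{\sS^2}$ assembles into a partition whose consecutive tuples all meet the gap conditions, so it lies in $\sS^2$.

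The main obstacle is bookkeeping rather than ideas. I need to make sure the cross-block check really is governed only by the largest tuple of the previous block and the smallest tuple of the next block, which holds because the tuples are consecutive. I also need to handle empty intermediate blocks correctly, which is fine since a distance of at least $3$ always suffices. Finally, I must confirm that the modulus condition $T=2\ge$ the largest part in $\Pi_{\sS^2}$ holds, which it does.
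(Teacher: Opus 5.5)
Your proof is correct and follows the same route as the paper. Both arguments cut $\la$ into blocks of parts in $\{2m+1,2m+2\}$, identify the shifted block contents with $\Pi_{\sS^2}$, and verify the linking sets by a finite case check on adjacent blocks. The only differences are that you test the gap-matrix conditions on consecutive tuples, whereas the paper phrases the forbidden configurations through the equivalent condition $\la_{j+2}-\la_j\geq 2$, and that you spell out the reverse inclusion and the nonadjacent-block case, which the paper leaves implicit.
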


\begin{proof}
   It is clear that all partitions in $\sI(\langle\Pi_{\sS^2}, \cL_{\sS^2}\rangle, 2)$ satisfy the conditions for $\sS^2$. In fact, we observe that for any partition $\lambda \in \sI(\langle\Pi_{\sS^{2}}, \cL_{\sS^{2}}\rangle, 2)$, the set $\Pi_{\mathscr{S}^{2}}$ together with $T = 2$ ensures that the parts of $\lambda$ appear at most twice. Moreover, $\mathcal{L}_{\mathscr{S}^{2}}$ guarantees that the condition $\lambda_{j+2} - \lambda_j \geq 2$ holds. On the other hand, we decompose each partition in $\sS^{2}$ into blocks $B_0$, $B_1$, ..., where parts between $(2i+1)$ and $(2i+2)$ fall into block $B_i$. Clearly, each block $B_i$ contains at most two parts. In addition, applying $\phi^{-2i}$ to the block $B_i$ yields a partition in $\Pi_{\sS^{2}}$. We then have the following cases:
    \begin{enumerate}[(i)]
        \item if $\phi^{-2i}(B_i)$ is $\pi_1$, $\pi_2$ or $\pi_3$, then $\phi^{-2(i+1)}(B_{i+1})$ can be any partition in $\Pi_{\sS^{2}}$;

        \item if $\phi^{-2i}(B_i)$ is $\pi_4$ or $\pi_5$, then $\phi^{-2(i+1)}(B_{i+1})$ cannot be $\pi_3$. Otherwise, the cases $(2i+1)+(2i+2)+(2i+3)+(2i+3)$ or $(2i+2)+(2i+3)+(2i+3)$ would violate the condition $\la_{j+2}-\la_j\geq 2$ in the definition of $\sS^{2}$;

        \item if $\phi^{-2i}(B_i)$ is $\pi_6$, then $\phi^{-2(i+1)}(B_{i+1})$ cannot be $\pi_2$, $\pi_3$ or $\pi_4$. Otherwise, the cases $(2i+2)+(2i+2)+(2i+3)$, $(2i+2)+(2i+2)+(2i+3)+(2i+3)$ or $(2i+2)+(2i+2)+(2i+3)+(2i+4)$ would violate the condition $\la_{j+2}-\la_j\geq 2$ in the definition of $\sS^{2}$.
    \end{enumerate}
    Therefore, we conclude that $\sS^{2}=\sI(\langle\Pi_{\sS^{2}}, \cL_{\sS^{2}}\rangle, 2)$.
\end{proof}

\begin{example}
Consider the partition $\la=1+1+3+4+6+6+9+12\in \sS^2$, we may decompose it as follows:
\begin{align*}
\la&=\underbrace{\phi^0(1+1)}_{B_0}\oplus\underbrace{\phi^2(1+2)}_{B_1}\oplus\underbrace{\phi^4(2+2)}_{B_2}\oplus\underbrace{\phi^6(\ep)}_{B_3}\oplus\underbrace{\phi^8(1)}_{B_4}\oplus\underbrace{\phi^{10}(2)}_{B_5}\\
&=\phi^0(\pi_3)\oplus\phi^2(\pi_4)\oplus\phi^4(\pi_6)\oplus\phi^6(\pi_1)\oplus\phi^{8}(\pi_2)\oplus\phi^{10}(\pi_5).
\end{align*}
One readily verifies that $\la\in \sI(\langle\Pi_{\sS^{2}}, \cL_{\sS^{2}}\rangle, 2)$.
\end{example}

\begin{proof}[Proof of Theorem \ref{thm:Ak=2}]
Throughout the proof, we retain the block-decomposition \eqref{id:decom} for every partition in $\sS^2$. The generating function of the set $\sS^2$ is subsequently introduced by conditioning on the first block $B_0=\la_0$ of the said decomposition. For $1\leq i\leq 6$ we define: 
\begin{align}
    G_i(x)=G_i(x, y, q):=\sum_{\substack{\la\in\sS^2\\ \la_0=\pi_i}}x^{\ell(\la)}y^{\ell_2(\la)}q^{|\la|}.
\end{align}
By isolating the first block of the partition, it implies that
\begin{align}
    G_i(x)&=x^{\ell(\pi_i)}y^{\ell_2(\pi_i)}q^{|\pi_i|}\sum_{j:\pi_j\in\cL_{\sS^2}(\pi_i)}\sum_{\substack{\la'\in \sS^2\\ \la_0'=\pi_j}}(xq^2)^{\ell(\la')}y^{\ell_2(\la')}q^{|\la'|}\nonumber\\
    &=x^{\ell(\pi_i)}y^{\ell_2(\pi_i)}q^{|\pi_i|}\sum_{j: \pi_j\in \cL_{\sS^{2}}(\pi_i)}G_j(xq^2).
\end{align}
Therefore, we have
\begin{align}
    \left(\begin{array}{cccccccc}
         G_1(x)\\
         G_2(x)\\
         G_3(x)\\
         G_4(x)\\
         G_5(x)\\
         G_6(x)
    \end{array}\right)=\mathrm{W}_{\sS^2}\cdot \mathrm{A}_{\sS^2}\cdot\left(\begin{array}{cccccccc}
         G_1(xq^2)\\
         G_2(xq^2)\\
         G_3(xq^2)\\
         G_4(xq^2)\\
         G_5(xq^2)\\
         G_6(xq^2)
    \end{array}\right),
\end{align}
where
\begin{align}
    \rW_{\sS^2}=\diag(1, xq, x^2yq^2, x^2q^3, xq^2, x^2yq^4)\label{eq:Wk=2}
\end{align}
and
\begin{align}\label{eq:Ak=2}
    \rA_{\sS^2}=\left(\begin{array}{cccccccc}
         1 & 1 & 1 & 1 & 1 & 1\\
         1 & 1 & 1 & 1 & 1 & 1\\
         1 & 1 & 1 & 1 & 1 & 1\\
         1 & 1 & 0 & 1 & 1 & 1\\
         1 & 1 & 0 & 1 & 1 & 1\\
         1 & 0 & 0 & 0 & 1 & 1
    \end{array}\right).
\end{align}
We further write
\begin{align}
    \left(\begin{array}{cccccccc}
         F_1(x)\\
         F_2(x)\\
         F_3(x)\\
         F_4(x)\\
         F_5(x)\\
         F_6(x)
    \end{array}\right)=\rA_{\sS^2}\cdot\left(\begin{array}{cccccccc}
         G_1(x)\\
         G_2(x)\\
         G_3(x)\\
         G_4(x)\\
         G_5(x)\\
         G_6(x)
    \end{array}\right), 
\end{align}
we thus obtain the following system of $q$-difference equations:
\begin{align}
    \left(\begin{array}{cccccccc}
         F_1(x)\\
         F_2(x)\\
         F_3(x)\\
         F_4(x)\\
         F_5(x)\\
         F_6(x)
    \end{array}\right)=\rA_{\sS^2}\cdot \rW_{\sS^2}\cdot \left(\begin{array}{cccccccc}
         F_1(xq^2)\\
         F_2(xq^2)\\
         F_3(xq^2)\\
         F_4(xq^2)\\
         F_5(xq^2)\\
         F_6(xq^2)
    \end{array}\right).\label{id:matrixk=2}
\end{align}
First, observe that $G_1(0)=1$ and $G_i(0)=0$ for $i\in\{2, 3, 4, 5, 6\}$, since the empty partition $\ep$ is counted only in $G_1(x)$. Thus, one can easily check that 
\begin{align*}
    F_1(0)=F_2(0)=F_3(0)=F_4(0)=F_5(0)=F_6(0)=1.
\end{align*}
Recall that each $F_i(x)\in \bC\llbracket q\rrbracket\llbracket x\rrbracket$. If we treat \eqref{id:matrixk=2} as a matrix equation over $\bC\llbracket q\rrbracket\llbracket x\rrbracket$, then \cite[Proposition 15]{che20} ensures the existence and uniqueness of $F_i(x)$. Meanwhile, we note that
\begin{align*}
\sum_{\la\in\sS^{2}}x^{\ell(\la)}y^{\ell_2(\la)}q^{|\la|}&=\sum_{i\in \{1, 2, 3, 4, 5, 6\}}G_i(x)=F_1(x)=F_2(x)=F_3(x),\\
\sum_{\la\in\sS^2_{1}}x^{\ell(\la)}y^{\ell_2(\la)}q^{|\la|}&=\sum_{i\in \{1, 2, 4, 5, 6\}}G_i(x)=F_4(x)=F_5(x),\\
\sum_{\la\in\sS^2_0}x^{\ell(\la)}y^{\ell_2(\la)}q^{|\la|}&=\sum_{i\in \{1, 5, 6\}}G_i(x)=F_6(x).
\end{align*}

Consequently, in order to establish Theorem \ref{thm:Ak=2}, we verify that $H(\ubeta)$ defined below satisfies the same recurrence as \eqref{id:matrixk=2}. Henceforth, we fix $R=J=2$ in \eqref{id:q-multi}, and choose
\begin{align*}
    \ual=\left(\begin{array}{cccccccc}
         1 & 2\\
         2 & 4
    \end{array}\right),\quad \uA=(1,2), \quad \begin{array}{llll} x_1=x,\\x_2=y,\end{array}\quad \text{ and } \begin{array}{lllll}\ugaI=(1,2),\\\ugaII=(0, 1).\end{array}
\end{align*}
We thus obtain
\begin{align}
    H(\ubeta)=H(\beta_1, \beta_2)=\sum_{n_1, n_2\geq 0}\frac{x^{n_1+2n_2}y^{n_2}q^{\binom{n_1}{2}+4\binom{n_2}{2}+2n_1n_2+\beta_1n_1+\beta_2n_2}}{(q; q)_{n_1}(q^2; q^2)_{n_2}}.
\end{align}

We now proceed to prove that 
\begin{align}
    \left(\begin{array}{cccccccc}
         F_1(x)\\
         F_2(x)\\
         F_3(x)\\
         F_4(x)\\
         F_5(x)\\
         F_6(x)
    \end{array}\right)=\left(\begin{array}{cccccccc}
         H(1, 2)\\
         H(1, 2)\\
         H(1, 2)\\
         H(1, 4)\\
         H(1, 4)\\
         H(2, 4)\\
    \end{array}\right).\label{id:FHk=2}
\end{align}
It suffices to show that
\begin{align}
    \left(\begin{array}{cccccccc}
         H(1, 2)\\
         H(1, 2)\\
         H(1, 2)\\
         H(1, 4)\\
         H(1, 4)\\
         H(2, 4)\\
    \end{array}\right)=\rA_{\sS^2}\cdot \rW_{\sS^2}\cdot \left(\begin{array}{cccccccc}
         H(3, 6)\\
         H(3, 6)\\
         H(3, 6)\\
         H(3, 8)\\
         H(3, 8)\\
         H(4, 8)\\
    \end{array}\right).
\end{align}
where $\rW_{\sS^2}$ and $\rA_{\sS^2}$ are the same as \eqref{eq:Wk=2} and \eqref{eq:Ak=2}, respectively. If \eqref{id:FHk=2} holds, then Theorem \ref{thm:Ak=2} follows from the preceding discussion. We make use of Lemma \ref{lem:key rec} repeatedly. Recall that the coordinate on which the recurrence \eqref{id:key rec} acts is denoted by $\check{\beta}$. Then we have
\begin{align*}
H(1, \check{2})&=H(\check{1}, 4)+x^2yq^2H(3, 6)\\
&=(H(2, \check{4})+xqH(\check{2}, 6))+x^2yq^2H(3, 6)\\
&=(H(\check{2}, 6)+x^2yq^4H(4, 8))+xq(H(3, 6)+xq^2H(3, 8))+x^2yq^2H(3, 6)\\
&=(H(3, 6)+xq^2H(3, 8))+x^2yq^4H(4, 8)+xqH(3, 6)+x^2q^3H(3, 8))+x^2yq^2H(3, 6)\\
&=(1+xq+x^2yq^2)H(3, 6)+(x^2q^3+xq^2)H(3, 8)+x^2yq^4H(4, 8).
\end{align*}
In fact, the foregoing derivation simultaneously yields that
\begin{align*}
H(1, 4)&=(1+xq)H(3, 6)+(x^2q^3+xq^2)H(3, 8)+x^2yq^4H(4, 8),\\
H(2, 4)&=H(3, 6)+xq^2H(3, 8)+x^2yq^4H(4, 8).
\end{align*}
This procedure is encoded in the binary tree shown in Figure \ref{fig:k2}. Therefore, the proof is complete.
%Then starting from $H(1, 2)$, we repeatedly apply Lemma \ref{lem:key rec} in (2, 1) order, namely, for $H(\beta_1, \beta_2)$, we apply Lemma \ref{lem:key rec} with respect to $\beta_2$ and $\beta_1$ in sequence. The whole binary tree is shown in Figure \ref{fig:k2}. Hence, based on the process, we drive the following relation.
\begin{figure}[htbp]
\begin{center}
\begin{tikzpicture}[scale=1.2,
  % 顶点：只有圆圈，没有外框
  every node/.style={
  circle,
  minimum size=5mm,
  inner sep=0pt,
  text width=8mm,
    align=center},
  every edge/.style={draw,thick}]
  
  % 顶点
  \node (1) at (0,0) {$H$$(1, \check{2})$};
  \node (2) at (-1.5,-1) {$H$$(\check{1}, 4)$};
  \node (3) at (1.5,-1) {$H$$(3, 6)$};
  \node (4) at (-3,-2) {$H$$(2, \check{4})$};
  \node (5) at (0.5,-2) {$\boxed{H(\check{2}, 6)}$};
  \node (6) at (-4,-3) {$H$$(\check{2}, 6)$};
  \node (7) at (-2,-3) {$H$$(4, 8)$};
  \node (8) at (-5,-4) {$H$$(3, 6)$};
  \node (9) at (-3,-4) {$H$$(3, 8)$};
  \node (10) at (-0.5,-3) {$H$$(3, 6)$};
  \node (11) at (1.5,-3) {$H$$(3, 8)$};
  % 边（生成树），权重不画圈
  \path
    (1) edge node[left]{1} (2)
    (1) edge node[right]{$x^2yq^2$} (3)
    (2) edge node[left]{1} (4)
    (2) edge node[right]{$xq$} (5)
    (4) edge node[left]{1} (6)
    (4) edge node[right]{$x^2yq^4$} (7)
    (5) edge node[left]{1} (10)
    (5) edge node[right]{$xq^2$} (11)
    (6) edge node[left]{1} (8)
    (6) edge node[right]{$xq^2$} (9);
\end{tikzpicture}
\end{center}
\caption{The binary tree for $\sS^{2}$.}
\label{fig:k2}
\end{figure}

 %Hence, we have

%Finally, note that
%\begin{align}
%    F_1(x)=\sum_{\la\in\sR_{3,3}}x^{\ell(\la)}y^{\ell_2(\la)}q^{|\la|},\ F_4(x)=\sum_{\la\in\sR_{3,2}}x^{\ell(\la)}y^{\ell_2(\la)}q^{|\la|},\ F_6(x)=\sum_{\la\in\sR_{3,1}}x^{\ell(\la)}y^{\ell_2(\la)}q^{|\la|}.
%\end{align}
\end{proof}

\begin{remark}\label{rem:k2}
As shown in Figure \ref{fig:k2}, every desired solution occurs at one of the leftmost nodes in the binary tree; for these nodes, any right child that could otherwise be further decomposed is in fact already contained in these nodes. For  instance, $H(2, 6)$, framed as a right child, appears in the second-to-last position of the leftmost nodes. Moreover, along the leftmost branch, the lemma is applied repeatedly by first targeting the second coordinate and then the first, i.e., in the order $(2, 1)$, and this pair of steps is carried out twice in succession.
\end{remark}

%%%%%%%%%%%%%%%%%%%%%%%%%%%%%%%%%%%%
\subsection{Partition set $\sS^3$} Recall that $\sS^3$ is a set of partitions where each part appears at most three times and satisfying the following gap condition
\begin{align*}
\begin{pNiceMatrix}[first-row, first-col, nullify-dots, columns-width=auto]
 & [b]_1 & [b]_2 & [b]_3\\
 [a]_1 & 1 & 2 & 2\\
 [a]_2 & 2 & 2 & 3\\
 [a]_3 & 3 & 3 & 3
\end{pNiceMatrix}
\end{align*}
where the $(i, j)$-entry gives the minimal allowed value of $(b-a)$ for consecutive tuples $[a]_i$ and $[b]_j$ with $a<b$. Then to show that $\sS^3$ is a span one linked partition ideal, we provide the following lemma.

%Next, in order to see the second example, we need some concepts. Given a partition $\la\in \sP$ and a positive integer $a$, if there are exactly $k$ parts of $\la$ equal to $a$ then we can define the $k$-tuple $$[a]_k=\underbrace{a+a+\cdots +a}_{k}.$$ Now we see that $\sR_{3, 3}$ can be redefined as the set of partitions $\la$ where each part appears at most twice and satisfying the following gap condition
%\begin{align}
%\begin{pNiceMatrix}[first-row, first-col, nullify-dots, columns-width=auto]
 %& [b]_1 & [b]_2\\
 %[a]_1 & 1 & 2\\
% [a]_2 & 2 & 2
%\end{pNiceMatrix}
%\end{align}
%where $[a]_i$ and $[b]_j$ are two consecutive distinct tuples and each element in this matrix is the minimum difference between $a$ and $b$ for $a<b$ and $i, j=1, 2$. Therefore, naturally we may consider the set $\sS^3$ of partitions where 

\begin{lemma}
The partition set $\sS^3$ is a span one linked partition ideal $\sI(\langle \Pi_{\sS^3}, \cL_{\sS^3}\rangle, 3)$ with 
\begin{align*}
\Pi_{\sS^3}=\left\{\begin{array}{llll}
\pi_1=\ep,\ \pi_2=1,\ \pi_3=2,\ \pi_4=1+1,\ \pi_5=1+2,\ \pi_6=1+1+1, \\
\pi_7=2+2,\\
 \pi_8=3,\ \pi_9=1+3,\ \pi_{10}=2+3,\ \pi_{11}=1+1+3,\ \pi_{12}=1+2+3,\\
 \pi_{13}=2+2+2,\\
 \pi_{14}=3+3,\ \pi_{15}=1+3+3,\ \pi_{16}=1+1+3+3,\\
 \pi_{17}=3+3+3,\ \pi_{18}=1+3+3+3
\end{array}\right\}
\end{align*}
and 
\begin{align*}
\cL_{\sS^3}(\pi_i)=\begin{cases}
%\bigcup_{1\leq k\leq 18}\{\pi_k\} 
\Pi_{\sS^3} & \text{ if }i=1, 2, 3, 4, 5, 6,\\
\Pi_{\sS^3}\backslash\{\pi_6\} 
%\{\pi_1, \pi_2, \pi_3, \pi_4, \pi_5, \pi_7, \pi_8, \pi_9, \pi_{10}, \pi_{11}, \pi_{12}, \pi_{13}, \pi_{14}, \pi_{15}, \pi_{16}, \pi_{17}, \pi_{18}\}
%\bigcup_{1\leq k\leq 5, 7\leq k\leq 18}\{\pi_k\} 
& \text{ if }i=7,\\
\Pi_{\sS^3}\backslash \{\pi_4, \pi_6, \pi_{11}, \pi_{16}\} & \text{ if }i=8, 9, 10, 11, 12,\\
\{\pi_1, \pi_3, \pi_7, \pi_8, \pi_{10}, \pi_{13}, \pi_{14}, \pi_{17}\} & \text{ if }i=13,\\
\{\pi_1, \pi_3, \pi_7, \pi_8, \pi_{10}, \pi_{14}, \pi_{17}\} & \text{ if }i=14, 15, 16,\\
\{\pi_1, \pi_8, \pi_{14}, \pi_{17}\} &\text{ if }i=17, 18.
\end{cases}
\end{align*}
\end{lemma}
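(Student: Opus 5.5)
The plan is to mimic the proof of the analogous lemma for $\sS^2$, now with modulus $T=3$. Each partition $\la\in\sS^3$ is cut into blocks $B_0, B_1, \dots$, where $B_i$ collects the parts lying in $\{3i+1, 3i+2, 3i+3\}$. Three things then need checking:
\begin{enumerate}[(a)]
\item the possible shapes $\phi^{-3i}(B_i)$ are exactly the eighteen partitions in $\Pi_{\sS^3}$;
\item the gap conditions of Definition \ref{def:sk} only ever compare a block with the block immediately after it;
\item for each $\pi_i$, the blocks allowed to follow it are exactly $\cL_{\sS^3}(\pi_i)$.
\end{enumerate}
The reverse inclusion, that every concatenation $\phi^0(\la_0)\oplus\phi^3(\la_1)\oplus\cdots$ with $\la_{i}\in\cL_{\sS^3}(\la_{i-1})$ lies in $\sS^3$, comes out of the same case analysis. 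By (b), membership in $\sS^3$ is equivalent to every block being admissible and every adjacent pair being compatible.

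For (b), I use that every entry of the gap matrix for $k=3$ is at most $3$. Suppose the next strictly larger part after some part of $B_i$ lies in $B_{i'}$ with $i'\geq i+2$. Then the difference is at least $(3i+7)-(3i+3)=4$, which satisfies every required gap. Similarly, two consecutive tuples inside one block are governed only by the matrix. So I only need (i) the internal conditions within a block and (ii) the condition between the \emph{last} tuple of $B_i$ and the \emph{first} tuple of $B_{i+1}$.

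For (a), I enumerate the subsets of $\{1,2,3\}$ with multiplicities at most $3$ and test consecutive tuples against the matrix. A $[1]_2$ or $[1]_3$ cannot be followed by $2$. A $[1]_3$ cannot be followed by $3$, and a $[1]_2$ can be followed by $3$ only as $[3]_1$ or $[3]_2$. Any tuple $[2]_j$ cannot be followed by $3$. A $[1]_1$ followed by $[2]_j$ needs a gap of $2$ once $j\geq 2$, so $1+2+2$ and $1+2+2+2$ are excluded. This leaves precisely $\pi_1,\dots,\pi_{18}$. For example, $1+1+3+3$ survives because $[1]_2\to[3]_2$ needs a gap of $2$, while $1+1+3+3+3$ and $2+2+3$ die.

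For (c), I classify blocks by their last tuple, shifted so that it is $[a]_m$ with $a\in\{1,2,3\}$, and let the first tuple of the next block be $[b]_j$ with $b\in\{4,5,6\}$.
\begin{itemize}
\item Last tuple $[1]_m$ or $[2]_1$ (blocks $\pi_1,\dots,\pi_6$): the gap is at least $3\geq m$, respectively at least $2$, so there is no restriction.
\item Last tuple $[2]_2$ (block $\pi_7$): $b=4$ with $j=3$ is forbidden, which removes exactly $\pi_6$.
\item Last tuple $[3]_1$ (blocks $\pi_8,\dots,\pi_{12}$): $b=4$ with $j\geq2$ is forbidden, which removes the blocks starting with $1+1$, namely $\pi_4,\pi_6,\pi_{11},\pi_{16}$.
\item Last tuple $[2]_3$ (block $\pi_{13}$): $b=4$ is forbidden, and $b=5$ is fine since $j\leq 3$. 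The allowed blocks are those with no part $1$, namely $\pi_1,\pi_3,\pi_7,\pi_8,\pi_{10},\pi_{13},\pi_{14},\pi_{17}$.
\item Last tuple $[3]_2$ (blocks $\pi_{14},\pi_{15},\pi_{16}$): $b=4$ is forbidden, and $b=5$ requires $j\leq 2$. This additionally removes $\pi_{13}$.
\item Last tuple $[3]_3$ (blocks $\pi_{17},\pi_{18}$): this forces $b=6$, giving $\pi_1,\pi_8,\pi_{14},\pi_{17}$.
\end{itemize}
In each case the lists match $\cL_{\sS^3}$.

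The main obstacle is bookkeeping rather than ideas. The work is making sure the enumeration in (a) is exhaustive and that each linking set is computed against every one of the eighteen candidate successors. I would organize it as a table indexed by last tuple versus first tuple.
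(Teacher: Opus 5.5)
Your proposal is correct and follows the paper's route---cut $\lambda$ into blocks of width $3$, enumerate the eighteen block shapes, and check compatibility of adjacent blocks. It also supplies details the paper omits, notably your point (b) that non-adjacent blocks never interact because every gap entry is at most $3$, and all six linking-set computations in (c) check out. One slip in (a) needs fixing: $[2]_1$ \emph{can} be followed by $[3]_1$ (this is how $\pi_{10}=2+3$ and $\pi_{12}=1+2+3$ arise), so the rule should read that $[2]_1$ may be followed only by $[3]_1$, while $[2]_j$ with $j\geq 2$ cannot be followed by $3$ at all.
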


\begin{proof}
It is clear that all partitions in $\sI(\langle\Pi_{\sS^3}, \cL_{\sS^3}\rangle, 3)$ satisfy the conditions for $\sS^3$. On the other hand, we decompose each partition in $\sS^3$ into blocks $B_0, B_1, ...$ such that all parts between $3i+1$ and $3i+3$ fall into block $B_i$. It is plain that $\phi^{-3i}(B_i)$ is exclusively from $\Pi_{\sS^3}$. Moreover, if $\phi^{-3i}(B_i)$ is $\pi_i$ for $i=1, 2, 3, 4, 5, 6$, then clearly $\phi^{-3(i+1)}(B_{i+1})$ can be any among $\Pi_{\sS^3}$. If $\phi^{-3i}(B_i)$ is $\pi_7$ so that $B_i$ is $(3i+2)+(3i+2)$, then $B_{i+1}$ cannot be $\pi_6$; otherwise, the parts $(3i+2)+(3i+2)+(3i+4)+(3i+4)+(3i+4)$ would violate the difference conditions in $\sS^3$. One may carry out similar arguments for other possibilities of $\phi^{-3i}(B_i)$ and the details are omitted.
\end{proof}

\begin{example}
Consider the partition $\la=1+2+3+4+7+7+7+10+12+12+12+17+18\in \sS^3$, we may decompose it as follows:
\begin{align*}
\la&=\underbrace{\phi^0(1+2+3)}_{B_0}\oplus\underbrace{\phi^3(1)}_{B_1}\oplus\underbrace{\phi^6(1+1+1)}_{B_2}\oplus\underbrace{\phi^9(1+3+3+3)}_{B_3}\oplus\underbrace{\phi^{12}(\ep)}_{B_4}\oplus\underbrace{\phi^{15}(2+3)}_{B_5}\\
&=\phi^0(\pi_{12})\oplus\phi^3(\pi_2)\oplus\phi^6(\pi_6)\oplus\phi^9(\pi_{18})\oplus\phi^{12}(\pi_1)\oplus\phi^{15}(\pi_{10}).
\end{align*}
One readily verifies that $\la\in \sI(\langle\Pi_{\sS^{3}}, \cL_{\sS^{3}}\rangle, 3)$.
\end{example}

\begin{proof}[Proof of Theorem \ref{thm:Ak=3}]
There are arguments similar to those in the proof of Theorem \ref{thm:Ak=2}, so we may omit some details that are straightforward to verify. For $1\leq i\leq 18$, we define:
\begin{align*}
G_i(x)=G_i(x, y, z, q):&=\sum_{\substack{\la\in \sS^3\\ \la_0=\pi_i}}x^{\ell(\la)}y^{\ell_2(\la)}z^{\ell_3(\la)}q^{|\la|}\\
&=x^{\ell(\pi_i)}y^{\ell_2(\pi_i)}z^{\ell_3(\pi_i)}q^{|\pi_i|}\sum_{j: \pi_j\in \cL_{\sS^3}(\pi_i)}G_j(xq^3).
\end{align*}
Then 
\begin{align}
\left(\begin{array}{cccccccccccc}
G_1(x)\\
G_2(x)\\
\vdots\\
G_{18}(x)
\end{array}\right)=\rW_{\sS^3}\cdot \rA_{\sS^3}\cdot \left(\begin{array}{cccccccccccc}
G_1(xq^3)\\
G_2(xq^3)\\
\vdots\\
G_{18}(xq^3)
\end{array}\right)
\end{align}
where
\begin{align}
\rW_{\sS^3}=\diag(1, xq, &xq^2, x^2yq^2, x^2q^3, x^3zq^3, x^2yq^4, xq^3,\nonumber \\
&x^2q^4, x^2q^5, x^3yq^5, x^3q^6, x^3zq^6, x^2yq^6, x^3yq^7, x^4y^2q^8, x^3zq^9, x^4zq^{10})
\end{align}
and 
\begin{align}
\rA_{\sS^3}=\left(\begin{array}{cccccccccccccccccccccccccccccccccccccccccc}
1 & 1 & 1 & 1 & 1 & 1 & 1 & 1 & 1 & 1 & 1 & 1 & 1 & 1 & 1 & 1 & 1 & 1\\
1 & 1 & 1 & 1 & 1 & 1 & 1 & 1 & 1 & 1 & 1 & 1 & 1 & 1 & 1 & 1 & 1 & 1\\
1 & 1 & 1 & 1 & 1 & 1 & 1 & 1 & 1 & 1 & 1 & 1 & 1 & 1 & 1 & 1 & 1 & 1\\
1 & 1 & 1 & 1 & 1 & 1 & 1 & 1 & 1 & 1 & 1 & 1 & 1 & 1 & 1 & 1 & 1 & 1\\
1 & 1 & 1 & 1 & 1 & 1 & 1 & 1 & 1 & 1 & 1 & 1 & 1 & 1 & 1 & 1 & 1 & 1\\
1 & 1 & 1 & 1 & 1 & 1 & 1 & 1 & 1 & 1 & 1 & 1 & 1 & 1 & 1 & 1 & 1 & 1\\
1 & 1 & 1 & 1 & 1 & 0 & 1 & 1 & 1 & 1 & 1 & 1 & 1 & 1 & 1 & 1 & 1 & 1\\
1 & 1 & 1 & 0 & 1 & 0 & 1 & 1 & 1 & 1 & 0 & 1 & 1 & 1 & 1 & 0 & 1 & 1\\
1 & 1 & 1 & 0 & 1 & 0 & 1 & 1 & 1 & 1 & 0 & 1 & 1 & 1 & 1 & 0 & 1 & 1\\
1 & 1 & 1 & 0 & 1 & 0 & 1 & 1 & 1 & 1 & 0 & 1 & 1 & 1 & 1 & 0 & 1 & 1\\
1 & 1 & 1 & 0 & 1 & 0 & 1 & 1 & 1 & 1 & 0 & 1 & 1 & 1 & 1 & 0 & 1 & 1\\
1 & 1 & 1 & 0 & 1 & 0 & 1 & 1 & 1 & 1 & 0 & 1 & 1 & 1 & 1 & 0 & 1 & 1\\
1 & 0 & 1 & 0 & 0 & 0 & 1 & 1 & 0 & 1 & 0 & 0 & 1 & 1 & 0 & 0 & 1 & 0\\
1 & 0 & 1 & 0 & 0 & 0 & 1 & 1 & 0 & 1 & 0 & 0 & 0 & 1 & 0 & 0 & 1 & 0\\
1 & 0 & 1 & 0 & 0 & 0 & 1 & 1 & 0 & 1 & 0 & 0 & 0 & 1 & 0 & 0 & 1 & 0\\
1 & 0 & 1 & 0 & 0 & 0 & 1 & 1 & 0 & 1 & 0 & 0 & 0 & 1 & 0 & 0 & 1 & 0\\
1 & 0 & 0 & 0 & 0 & 0 & 0 & 1 & 0 & 0 & 0 & 0 & 0 & 1 & 0 & 0 & 1 & 0\\
1 & 0 & 0 & 0 & 0 & 0 & 0 & 1 & 0 & 0 & 0 & 0 & 0 & 1 & 0 & 0 & 1 & 0
\end{array}\right).
\end{align}
Let 
\begin{align}
\left(\begin{array}{cccccccccccc}
F_1(x)\\
F_2(x)\\
\vdots\\
F_{18}(x)
\end{array}\right)=\rA_{\sS^3}\cdot \left(\begin{array}{cccccccccccc}
G_1(x)\\
G_2(x)\\
\vdots\\
G_{18}(x)
\end{array}\right).
\end{align}
Then we have the system of $q$-difference equations as follows:
\begin{align}
\left(\begin{array}{cccccccccccc}
F_1(x)\\
F_2(x)\\
\vdots\\
F_{18}(x)
\end{array}\right)=\rA_{\sS^3}\cdot\rW_{\sS^3}\cdot \left(\begin{array}{cccccccccccc}
F_1(xq^3)\\
F_2(xq^3)\\
\vdots\\
F_{18}(xq^3)
\end{array}\right).\label{id:k3}
\end{align}

Note that the solution of \eqref{id:k3} is uniquely determined by $F_1(0)=\cdots =F_{18}(0)=1$. And recall that $\sS^3_{i, j}$ be the set of partitions in $\sS^3$ where at most $i$ of the parts are equal to $1$ and at most $j$ of the parts are equal to $2$ for $0\leq i, j\leq 3$ (note that $\sS^3_{3, 3}=\sS^3$). Then we have

\begin{align*}
\sum_{\la\in\sS^3}x^{\ell(\la)}y^{\ell_2(\la)}z^{\ell_3(\la)}q^{|\la|}&=\sum_{1\leq i\leq 18}G_{i}(x)=F_1(x)=\cdots =F_{6}(x),\\
\sum_{\la\in\sS^3_{2, 3}}x^{\ell(\la)}y^{\ell_2(\la)}z^{\ell_3(\la)}q^{|\la|}&=\sum_{\substack{1\leq i\leq 18\\ i\neq 6}}G_i(x)=F_7(x),\\
\sum_{\la\in\sS^3_{1, 3}}x^{\ell(\la)}y^{\ell_2(\la)}z^{\ell_3(\la)}q^{|\la|}&=\sum_{\substack{1\leq i\leq 18\\ i\notin \{4, 6, 11, 16\}}}G_i(x)=F_8(x)=\cdots =F_{12}(x),\\
\sum_{\la\in\sS^3_{0, 3}}x^{\ell(\la)}y^{\ell_2(\la)}z^{\ell_3(\la)}q^{|\la|}&=\sum_{i\in \{1, 3, 7, 8, 10, 13, 14, 17\}}G_i(x)=F_{13}(x),\\
\sum_{\la\in\sS^3_{0, 2}}x^{\ell(\la)}y^{\ell_2(\la)}z^{\ell_3(\la)}q^{|\la|}&=\sum_{i\in\{1, 3, 7, 8, 10, 14, 17\}}G_i(x)=F_{14}(x)=F_{15}(x)=F_{16}(x),\\
\sum_{\la\in\sS^3_{0, 0}}x^{\ell(\la)}y^{\ell_2(\la)}z^{\ell_3(\la)}q^{|\la|}&=\sum_{i\in\{1, 8, 14, 17\}}G_i(x)=F_{17}(x)=F_{18}(x).
\end{align*}

\begin{figure}[htbp]
\begin{center}
\begin{tikzpicture}[scale=1.2,
  % 顶点：只有圆圈，没有外框
  every node/.style={
  circle,
  minimum size=5mm,
  inner sep=0pt,
  text width=8mm,
    align=center        % 居中对齐
    },
  every edge/.style={draw,thick}]
  
  % 顶点
  %第一层
  \node (1) at (0,0) {$H$$(1, 2, \check{3})$};
  %第二层
  \node (2) at (-1,-1) {H(1,$\check{2}$,6)};
  \node (3) at (2,-1) {H(4,8,12)};
  %第三层
  \node (4) at (-2,-2) {H($\check{1}$,4,6)};
  \node (5) at (1,-2) {$\boxed{\mathrm{H}(3,\check{6}, 12)}$};
  %第三层
  \node (6) at (-3,-3) {H(2,4,$\check{6}$)};
  \node (7) at (0,-3) {$\boxed{\mathrm{H}(\check{2},6,9)}$};
  %第四层
  \node (8) at (-4,-4) {H(2,$\check{4}$,9)};
  \node (9) at (-1,-4) {H(5,10,15)};
  %第5层
  \node (10) at (-5,-5) {H($\check{2}$,6,9)};
  \node (11) at (-2,-5) {H(4,8,15)};
  %第6层
  \node (12) at (-6,-6) {H(3,6,$\check{9}$)};
  \node (13) at (-3,-6) {$\boxed{\mathrm{H}(\check{3},8,12)}$};
  %第7层
  \node (14) at (-7,-7) {H(3,$\check{6}$,12)};
  \node (15) at (-4,-7) {H(6,12,18)};
  %第8层
  \node (16) at (-8,-8) {H($\check{3}$,8,12)};
  \node (17) at (-5,-8) {H(5,10,18)};
  %第9层
  \node (18) at (-9,-9) {H(4,8,12)};
  \node (19) at (-6,-9) {H(4,10,15)};
  % 边（生成树），权重不画圈
  \path
  %第一层
    (1) edge node[left]{1} (2)
    (1) edge node[right]{$x^3zq^3$} (3)
    %第二层
    (2) edge node[left]{1} (4)
    (2) edge node[right]{$x^2yq^2$} (5)
    %第三层
    (4) edge node[left]{1} (6)
    (4) edge node[right]{$xq$} (7)
    %第四层
    (6) edge node[left]{1} (8)
    (6) edge node[right]{$x^3zq^6$} (9)
    %第5层
    (8) edge node[left]{1} (10)
    (8) edge node[right]{$x^2yq^4$} (11)
    %第6层
    (10) edge node[left]{1} (12)
    (10) edge node[right]{$xq^2$} (13)
    %第7层
    (12) edge node[left]{1} (14)
    (12) edge node[right]{$x^3zq^9$} (15)
    %第8层
    (14) edge node[left]{1} (16)
    (14) edge node[right]{$x^2yq^6$} (17)
    %第8层
    (16) edge node[left]{1} (18)
    (16) edge node[right]{$x^2yq^6$} (19);
\end{tikzpicture}
\end{center}
\caption{The binary tree for $\sS^3$.}
\label{fig:kf}
\end{figure}

Next, to obtain the solution of \eqref{id:k3}, we fix $R=J=3$ in \eqref{id:q-multi}, and choose
\begin{align*}
\ual=\left(\begin{array}{ccccc}
1 & 2 & 3\\
2 & 4 & 6\\
3 & 6 & 9
\end{array}\right),\quad \uA=(1, 2, 3),\quad\begin{array}{lllll} \uga_1=(1, 2, 3),\\ \uga_2=(0, 1, 0),\\ \uga_3=(0, 0, 1),\end{array}\quad\begin{array}{lllll} x_1=x,\\ x_2=y,\\ x_3=z.\end{array}
\end{align*}
Hence, we obtain
\begin{align}
H(\ubeta)=H(\beta_1, \beta_2, \beta_3)=\sum_{n_1, n_2, n_3\geq 0}&\frac{x^{n_1+2n_2+3n_3}y^{n_2}z^{n_3}}{(q; q)_{n_1}(q^2; q^2)_{n_2}(q^3; q^3)_{n_3}}\nonumber\\
&\times q^{\binom{n_1}{2}+4\binom{n_2}{2}+9\binom{n_3}{2}+2n_1n_2+3n_1n_3+6n_2n_3+\beta_1n_1+\beta_2n_2+\beta_3n_3}.
\end{align}

Finally, by iteratively applying Lemma \ref{lem:key rec} in the order $(3, 2, 1)$ (namely, for $H(\beta_1, \beta_2, \beta_3)$, we apply Lemma \ref{lem:key rec} with respect to $\beta_3$, $\beta_2$ and $\beta_1$ in sequence), we obtain the binary tree shown in Figure \ref{fig:kf}. Observe that every node enclosed by a square box can be further decomposed, and the corresponding decomposition is encoded in the leftmost nodes of the tree. For instance, the node $H(3, 8, 12)$ re-appears in the second-to-last leftmost node. This derivation further implies that the solution of matrix equation \eqref{id:k3} equals
\begin{align*}
F_i(x)=\begin{cases}
H(1, 2, 3) & \text{ if }i=1, 2, 3, 4, 5, 6,\\
H(1, 2, 6) & \text{ if }i=7,\\
H(1, 4, 6) & \text{ if }i=8, 9, 10, 11, 12,\\
H(2, 4, 6) & \text{ if }i=13,\\
H(2, 4, 9) & \text{ if }i=14, 15, 16,\\
H(3, 6, 9) & \text{ if }i=17, 18,\\
\end{cases}
%F_1(x)=\cdots =F_6(x)&=H(1, 2, 3),\\
%F_7(x)&=H(1, 2, 6),\\
%F_8(x)=\cdots =F_{12}(x)&=H(1, 4, 6),\\
%F_{13}(x)&=H(2, 4, 6),\\
%F_{14}(x)=F_{15}(x)=F_{16}(x)&=H(2, 4, 9),\\
%F_{17}(x)=F_{18}(x)&=H(3, 6, 9).
\end{align*}
which is exactly the desired result.

%Then starting from $H(1, 2, 3)$, we repeatedly apply Lemma \ref{lem:key rec} in $(3, 2, 1)$ order. The whole binary tree is shown in Figure \ref{fig:k4}. Hence, based on the process, we drive the following relation.

\end{proof}

\begin{remark}
As discussed in the two preceding proofs, for any $k$ we can incorporate $\sS^k$ into the framework of span one linked partition ideals. In fact, we fix $R=J=k$ in \eqref{id:q-multi}, and choose
\begin{align*}
\ual=\left(\begin{array}{cccccccc}
1 & 2 & \cdots & k\\
2 & 4 & \cdots & 2k\\
\vdots & \vdots & & \vdots\\
k & 2k & \cdots & k^2
\end{array}\right),\quad \uA=(1, 2, ..., k),\quad \begin{array}{llll}\uga_1=(1, 2, ..., k),\\ \uga_i=(0, ..., 0, 1, 0, ..., 0) \text{ for }2\leq i\leq k.\end{array}
\end{align*}
where $\uga_i$ is a $k$-dimensional vector with a $1$ in the $i$-th position and $0$'s elsewhere. Hence, we have
\begin{align*}
H(1, 2, ..., k)=\sum_{n_1, ..., n_k\geq 0}&\frac{x_1^{\sum_{i=1}^k in_i}(\prod_{i=2}^kx_i^{n_i})q^{\sum_{i=1}^k(i^2\binom{n_i}{2}+in_i)+\sum_{1\leq i<j\leq k}ijn_in_j}}{(q; q)_{n_1}(q^2; q^2)_{n_2}\cdots (q^k; q^k)_{n_k}}.
\end{align*}
Interested readers may work through the case for arbitrary $k$ step-by-step by using the binary tree representation. In the next section, we will present a purely combinatorial proof for $\sS^k$.
%As explained in Remark \ref{rem:k2} and illustrated in the preceding proof, this specific decomposition ensures that the required solutions for $\sS^2$ and $\sS^3$ are located precisely at the leftmost nodes of the resulting binary tree (including the root). Any right node that can be decomposed further re-appears as a leftmost node at a deeper level. 
\end{remark}

%%%%%%%%%%%%%%%%%%%%%%%%%%%%%%%%%%%
\section{Combinatorics of the partition set $\sS^k$}\label{sec:com-k}

In this section, we focus on the partition set $\sS^k$ introduced in Definition \ref{def:sk}, and provide a combinatorial proof of its refined generating function, as stated in Theorem \ref{thm:Ak}. For $n_1, ..., n_k\geq 0$, we define
\begin{align*}
\sS^k_{\bn}=\sS^k_{(n_1, n_2, ..., n_k)}:=\{\la\in \sS^k: \ell(\la)=\sum_{i=1}^kin_i, \text{ and }\ell_i(\la)=n_i\text{ for }2\leq i\leq k\}.
\end{align*}
And then we may define the base partition for $\sS^k_{\bn}$:
%the exponent of $q$ on the right hand side of \eqref{id:Ak} can be decomposed as follows:
\begin{align}
\beta^{\bn}=\beta^{(n_1, n_2, ..., n_k)}:=\beta^{(n_k)}\oplus \cdots \oplus \beta^{(n_2)}\oplus\beta^{(n_1)},\label{id:base}
%&[1]_k+[k+1]_k+\cdots +[k(n_k-1)+1]_k\\
%&+\cdots+[\sum_{j=i}^{k-1}(j+1)n_{j+1}+1]_i+\cdots +[\sum_{j=i}^{k-1}(j+1)n_{j+1}+i(n_i-1)+1]_i\\
%&+\cdots +[\sum_{j=1}^{k-1}(j+1)n_{j+1}+1]_1+\cdots +[\sum_{j=1}^{k-1}(j+1)n_{j+1}+n_1]_1.
\end{align}
where for any $1\leq i\leq k$ we have
\begin{align*}
\beta^{(n_i)}=\left[\sum_{j=i+1}^{k}(jn_j)+1\right]_i+\left[\sum_{j=i+1}^{k}(jn_j)+i+1\right]_i+\cdots +\left[\sum_{j=i+1}^{k}(jn_{j})+i(n_i-1)+1\right]_i.
\end{align*}
Notice that if we take the first part of $\beta^{(i-1)}$ and the last part of $\beta^{(i)}$, the difference is
\begin{align*}
\left(\sum_{j=i}^{k}(jn_j)+1\right)-\left(\sum_{j=i+1}^{k}(jn_j)+i(n_i-1)+1\right)=i,
\end{align*}
so the $\beta^{(i)}$ are pairwise disjoint. Furthermore, one can verify that $\beta^{\bn}\in \sS^k$. Next, the weight and the partition statistics are easily seen to be as follows:
\begin{align*}
&|\beta^{\bn}|=\sum_{i=1}^k(i^2\binom{n_i}{2}+in_i)+\sum_{1\leq i<j\leq k}ijn_in_j,\\
&\ell(\beta^{\bn})=\sum_{i=1}^{k}in_i,\quad \ell_i(\beta^{\bn})=n_i\text{ for all }2\leq i\leq k.
\end{align*}
Finally, one can check that $\beta^{\bn}$ is the base partition for $\sS^k_{\bn}$, i.e., it is the unique partition in $\sS^k_{\bn}$ with the smallest weight. So far we have provided a combinatorial interpretation of the numerator on the right hand side of \eqref{id:Ak}. Next, for the denominator, we introduce $k$ incremental partitions. First, let $\sE_k$ be the set of partitions with parts which are multiples of $k$ and set $\sE_{k ,n}:=\{\la\in \sE_k: \ell(\la)\leq n\}$ for all positive integers $k\geq 1$. Note that if $k=1$, then we have $\sE_1=\sP$ which is the set of all partitions. It is readily seen that each factor $\frac{1}{(q; q)_i}$ generates precisely the incremental partitions $\mu^i\in \sE_{i, n_i}$ for $1\leq i\leq k$. Consequently, we have
\begin{align*}
\sum_{\substack{n_1, n_2, ..., n_k\geq 0\\ \beta^{\bn}, \mu^1, \mu^2, ..., \mu^k}}x_1^{\ell(\beta^{\bn})}(\prod_{i=2}^kx_i^{\ell_i(\beta^{\bn})})q^{|\beta^{\bn}|+\sum_{i=1}^{k}|\mu^i|}=\sum_{n_1, ..., n_k\geq 0}&\frac{x_1^{\sum_{i=1}^k in_i}(\prod_{i=2}^kx_i^{n_i})}{(q; q)_{n_1}(q^2; q^2)_{n_2}\cdots (q^k; q^k)_{n_k}}\\
&\times q^{\sum_{i=1}^k(i^2\binom{n_i}{2}+in_i)+\sum_{1\leq i<j\leq k}ijn_in_j}.
\end{align*}
From the preceding discussion, by appending these $\mu^i$ to the base partition $\beta^{\bn}$ we shall obtain all partitions in the set $\sS^k_{\bn}$. To make this claim explicit, we now construct the following bijection.

\begin{lemma}
For $\bn=(n_1, n_2, ..., n_k)$ and $n_1, n_2, ..., n_k\geq 0$, there exists a bijection
\begin{align*}
\varphi=\varphi_{\bn}:\left\{\beta^{\bn}\right\}\times \sE_{1, n_1}\times\sE_{2, n_2}\times\cdots\times\sE_{k, n_k}&\ri\sS^k_{\bn}\\
(\beta^{\bn}, \mu^1, \mu^2, ..., \mu^k)&\mapsto\la
\end{align*}
such that $|\la|=|\beta^{\bn}|+\sum_{i=1}^k|\mu^i|$, $\ell(\la)=\ell(\beta^{\bn})$ and $\ell_i(\la)=\ell_i(\beta^{\bn})$ for $2\leq i\leq k$. Consequently, Theorem \ref{thm:Ak} follows.
\end{lemma}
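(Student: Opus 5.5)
We construct $\varphi$ explicitly by inserting the increments in stages, following the base-increments framework of K\i l\i\c{c} and \cite{FL241}. Both sides are graded by weight, so it suffices to build $\varphi$ as an injective map that preserves weight and the statistics $\ell$ and $\ell_i$, and then to show that every $\la\in\sS^k_{\bn}$ has a preimage. The inverse map peels the increments off again.

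First we view the base $\beta^{\bn}$ as a sequence of $N=n_1+\cdots+n_k$ tuples. In increasing order there are $n_k$ tuples $[\cdot]_k$, then $n_{k-1}$ tuples $[\cdot]_{k-1}$, and so on down to $n_1$ tuples $[\cdot]_1$. Write the increment $\mu^i\in\sE_{i,n_i}$ as $\mu^i=i\nu^i$, where $\nu^i$ is a partition with at most $n_i$ parts. The parts of $\nu^i$ tell us how far to move the $i$-tuples: the largest part of $\nu^i$ moves the largest $i$-tuple, the next part moves the next one, and so on. Moving an $i$-tuple forward by one unit adds exactly $i$ to the weight. So the weight $i|\nu^i|=|\mu^i|$ is accounted for, and $\ell$ and the $\ell_i$ are unchanged, because only the positions of tuples change and never their multiplicities.

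The naive moves can break the gap condition. When an $i$-tuple $[a]_i$ moves forward, it eventually meets a larger tuple $[b]_j$ that is at the minimal allowed distance in matrix \eqref{mat:gap}. At that point we perform a crossing move instead of a simple shift. The two tuples swap their order, with the $i$-tuple overtaking the $j$-tuple, and their positions are readjusted so that the total weight still increases by exactly $i$. For example, with $j<i$, the pair $[a]_i,[a+i]_j$ becomes $[a']_j,[a'+j+\epsilon]_i$ with $a'$ chosen so the weight balances.

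The increments are processed in a fixed order: first $\mu^1$, moving $1$-tuples, which are already largest and need no crossings; then $\mu^2$, where $2$-tuples may cross $1$-tuples; and so on up to $\mu^k$. Within each $\mu^i$, the parts are inserted from largest to smallest, acting on the tuples from the right.

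The key steps, in order, are the following.
\begin{enumerate}
\item Define a single forward move of an $i$-tuple, including the crossing rules for every pair $(i,j)$. Check from \eqref{mat:gap} that each move stays inside $\sS^k$ and increases the weight by exactly $i$.
\item Prove that after processing $\mu^i$ the $i$-tuples are again arranged so that the next, smaller part of $\nu^i$ can be applied. This means the moved tuples keep a nonincreasing order of displacements, which amounts to checking that consecutive $i$-tuples stay at distance at least $i$ apart.
\item Construct the inverse. Given $\la\in\sS^k_{\bn}$, repeatedly take the largest $k$-tuple and move it backward until it is in base position relative to the $k$-tuples, recording the number of steps. Then do the same for the $(k-1)$-tuples, and so on. Backward moves undo crossings by the reverse rule.
\end{enumerate}

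The main obstacle is the crossing rule. It must be defined so that a forward move followed by the corresponding backward move is the identity. It must also be well defined near tuples of several different sizes, where the asymmetric entries $i$ versus $i+1$ in \eqref{mat:gap} matter.

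I would first try to define the crossing uniformly. When $[a]_i$ at minimal distance before $[b]_j$ must advance, replace the pair by the configuration in which the $j$-tuple sits at the smallest allowed position relative to its left neighbour, and the $i$-tuple is placed right after it at minimal gap. This position is forced by weight conservation, since the pair's weight rises by exactly $i$. I would verify this case analysis for $j<i$ and $j>i$ separately. Then I would check uniqueness of the backward step by showing that the backward move is determined by the largest $i$-tuple that is not in minimal position.

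Surjectivity then follows because every $\la\in\sS^k_{\bn}$ reaches $\beta^{\bn}$ in finitely many backward steps, since the weight strictly decreases. Combining the bijection with the weight and statistic accounting of the previous paragraph gives \eqref{id:Ak}.
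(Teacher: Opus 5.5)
Your forward map is the paper's $\varphi$: the same base $\beta^{\bn}$, $\mu^i_j/i$ forward moves on the $j$-th $i$-tuple, $i$ running from $1$ to $k$ with the largest $i$-tuple treated first. Your weight-balanced crossing $[a]_i+[a+i]_j\mapsto[a]_j+[a+j+1]_i$ ($j<i$) is exactly a forward move followed by the paper's adjustment.

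\textbf{The gap: the order of your inverse.} Your forward pass moves the largest $i$-tuple first and the smallest last. Its step-by-step inverse must therefore strip the \emph{smallest} $i$-tuple first, all the way back to its base slot, then the next one, in increasing order of size. This is what the paper does; it counts only the unit backward moves, not the extra jumps caused by normalizations. Starting from the largest $i$-tuple does not work, and neither does your criterion ``the largest $i$-tuple not in minimal position''.

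Take $k=2$, $\bn=(1,2)$, $\beta^{\bn}=[1]_2+[3]_2+[5]_1$, $\mu^1$ empty and $\mu^2=2+2$.
\begin{itemize}
\item Moving $[3]_2$ and adjusting gives $[1]_2+[3]_1+[5]_2$.
\item Moving $[1]_2$ and adjusting then gives $\la=[1]_1+[3]_2+[5]_2$.
\item In $\la$, every $2$-tuple already sits at the minimal admissible distance from its predecessor, although $\la\neq\beta^{\bn}$.
\item The largest $2$-tuple $[5]_2$ cannot move back at all: $[4]_2$ would be at distance $1$ from $[3]_2$, and no normalization exchanges two tuples of equal multiplicity.
\end{itemize}
If you instead read ``base position relative to the $k$-tuples'' as recording gaps to the preceding $i$-tuple, you get $0$ here. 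You are then left with an unspecified block-moving procedure whose compatibility with your forward map would need its own proof, since crossings make the order of moves matter.

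So ``forward move followed by the corresponding backward move is the identity'' gives bijectivity only when the backward moves are applied in exactly the reverse order. In the example this means $[3]_2\to[2]_2$ with the normalization $[1]_1+[2]_2\mapsto[1]_2+[3]_1$, then $[5]_2\to[4]_2$ with $[3]_1+[4]_2\mapsto[3]_2+[5]_1$. That is one backward move each, recovering $\mu^2=2+2$.

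\textbf{Smaller points.}
\begin{itemize}
\item The case $j>i$ you plan to verify never arises. While the $i$-tuples are processed, every tuple of multiplicity larger than $i$ lies to their left and is still in base position, so only crossings with multiplicities $m<i$ occur.
\item You do not mention that a crossing can cascade. After $[t]_m+[t+m+1]_i$, if the next tuple is $[t+i+m]_c$ with $c\le m$, the gap is $i-1$ and the adjustment must be repeated within the same forward move.
\item The crossed tuple lands exactly on the slot $t$ vacated by the $i$-tuple, as weight balance forces. This is in general not ``the smallest allowed position relative to its left neighbour'', so that description should go.
\end{itemize}
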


\begin{proof}
For a given $(k+1)$-fold partition vector $(\beta^{\bn}, \mu^1, ..., \mu^k)$ in which $\beta^{\bn}$ is the base partition as described in \eqref{id:base} and each $\mu^i=\mu_1^i+\mu_2^i+\cdots +\mu_{n_i}^i\in \sE_{i, n_i}$ for $1\leq i\leq k$ (if $\ell(\mu^i)<n_i$ then we pad the deficient positions with zeros, noting that these zeros do not participate in the calculation of any statistics), we first define the map $\varphi$. Recall that $\beta^{\bn}=\beta^{(n_k)}\oplus \cdots \oplus \beta^{(n_2)}\oplus\beta^{(n_1)}$. Then the map processes $i$ from $1$ to $k$; for each fixed $i$ it successively applies $\mu_j^i/i$ forward moves to the $j$-th $i$-tuple of $\beta^{(i)}$, and $j$ running backwards from $n_i$ to $1$. Throughout this process, a sequence of adjustments is applied to finally produce the partition $\la\in \sS^k_{\bn}$. We introduce the two operations required for this map: {\it forward moves} and {\it adjustments}.
\begin{description}
\item[Forward moves] For any $i$-tuple $[t]_i$, one forward move transforms it into $[t+1]_i$, increasing its weight by $i$. 

\item[Adjustments] As noted above, forward moves do not always proceed without difficulty; obstructions may rise because the partition obtained after the forward move would violate the difference condition defining the set $\sS^k_{\bn}$, necessitating the introduction of an adjustment. Consider an $i$-tuple $[t]_i$ that is scheduled to undergo one forward move, assume it is immediately followed by a $m$-tuple $[t+i]_m$ with $m<i$. Let the predecessor of $[t]_i$ be $[X]_p$, and the successor of $[t+i]_m$ be $[Y]_c$. If we apply one forward move on $[t]_i$, then resulting difference $(t+i)-(t+1)=i-1<i$ would violate the condition \eqref{mat:gap}. So the following adjustment may be performed:
  $$
  \begin{gathered}
 (\text{parts})+[X]_p+[{\bf t+1}]_i+[t+i]_m+[Y]_c+(\text{parts}) \\
  \qquad \downarrow \text {an adjustment on $[t+1]_i$} \\
 (\text{parts})+[X]_p+[t]_m+[{\bf t+m+1}]_i+[Y]_c+(\text{parts}).
  \end{gathered}
  $$
If $p<i$ then $X\leq t-p-1$, and if $p\geq i$ then $X\leq t-p$. This implies that $[X]_p+[t]_m$ is reasonable since $m<i$. On the other hand, observe that if $c\leq m<i$ and $Y=t+i+m$, then the preceding adjustment must be repeated on the $i$-tuple $[t+i+m]_i$ since $(t+i+m)-(t+m+1)=i-1<i$. In all other cases no further adjustment is required. This procedure is iterated until the emerging parts no longer violate the difference condition \eqref{mat:gap}. Therefore, the operation ``adjustment'' is well-defined.
\end{description}
In summary, we have obtained the unique $\la\in \sS^k_{\bn}$. Furthermore, the operation ``forward move'' merely transfers the entire weight of every $\mu^i$ to the base partition $\beta^{\bn}$ without altering either the number of parts $\ell$ or statistics $\ell_i$, while the operation ``adjustment'' leaves the weight, $\ell$ and $\ell_i$ completely unchanged.

Conversely, we may describe the inverse map $\varphi^{-1}$, from any partition $\la\in \sS^k_{\bn}$ back to the $(k+1)$-fold partition vector $(\beta^{\bn}, \mu^1, ..., \mu^k)$. The inverse map processes $i$ from $k$ down to $1$; for each fixed $i$ it applies backward moves to all $i$-tuples of $\la$ in increasing order of their sizes, converting them into $\beta^{(i)}$; the number of backward moves applied to each $i$-tuple, multiplied by $i$, produces all parts of $\mu^i$. As with the forward moves, however, these backward moves may encounter obstructions. We therefore proceed to give detailed definitions of the \emph{backward moves} and \emph{normalizations}.
\begin{description}
\item[Backward moves] For any $i$-tuple $[t]_i$, one backward move transforms it into $[t-1]_i$, decreasing its weight by $i$.

\item[Normalizations] To parallel the treatment of the forward moves, we consider an $i$-tuple $[t+m+1]_i$ that is scheduled to undergo one backward move. Let the predecessor of $[t+m+1]_i$ be $[X]_p+[t]_m$, and the successor of $[t+m+1]_i$ be $[Y]_c$.  
%under the assumption that it is immediately preceded by $s$ consecutive $m$-tuple $[t]_m+[t+2m]_m+\cdots +[t+(s-1)m]_m$ with $m<i$. Let the tuple immediately preceding $[t]_m$ be $[Y]_p$. 
If we apply one backward move on $[t+m+1]_i$, then resulting difference $(t+m)-t=m<m+1$ would violate the condition \eqref{mat:gap}. Hence, the following normalization may be performed:
 $$
  \begin{gathered}
   (\text{parts})+[X]_p+[t]_m+[{\bf t+m}]_i+[Y]_c+(\text{parts})\\
  \qquad \downarrow \text {a normalization on $[t+m]_i$} \\
 (\text{parts})+[X]_p+[{\bf t}]_i+[t+i]_m+[Y]_c+(\text{parts}) .
  \end{gathered}
  $$
In accordance with the order prescribed by the inverse map, we have $c\leq i$, whence $Y\geq t+m+i+1$. This guarantees that $[t+i]_m+[Y]_c$ is reasonable. On the other hand, if $m\leq p<i$ and $X=t-p$, then the preceding normalization must be repeated on the $i$-tuple $[t]_i$ since $t-(t-p)=p<p+1$. In all other cases no further normalizations is required. This procedure is iterated until the emerging parts no longer violate the difference condition \eqref{mat:gap}. Therefore, the operation ``normalization'' is well-defined.
\end{description}
It is clear to see that this is the inverse map of $\varphi$ step by step. In fact, $\varphi$ and $\varphi^{-1}$ proceed in completely opposite orders: $\varphi$ adds the appropriate weights to each $i$-tuple for $i$ increasing, whereas $\varphi^{-1}$ strips off the corresponding weights from each $i$-tuple for $i$ decreasing. Moreover, the forward move and backward move defined above form a pair of inverse operations, and the procedures of ``adjustments'' and ``normalizations'' are also inverse to each other. Then by an argument similar to that for $\varphi$, one can see that $\varphi^{-1}$ also preserves the weight and every statistic. Therefore, we conclude that $\varphi$ is a bijection of the sort described in this lemma.
\end{proof}

At the end of this section, we give an example to illustrate the map $\varphi$; the interested reader is invited to work out its inverse. For ease of presentation, we will consider partition in the set $\sS^3$.

\begin{example}
Given $\beta^{(2, 2, 1)}=[1]_3+[4]_2+[6]_2+[8]_1+[9]_1$ and $(\mu^1, \mu^2, \mu^3)=(1+2, 2+4, 6)$. First apply $\mu^1$ to all $1$-tuples, that is,
\begin{align*}
  & [1]_3+[4]_2+[6]_2+[{\bf 8}]_1+[{\bf 9}]_1\\
%  \qquad \downarrow \text {two move forwards on $[9]_1$ one move forward on $[8]_1$} \\
  \xrightarrow[\text{$\mu^1_1$ forward move on $[8]_1$}]{\text{$\mu^1_2$ forward moves on $[9]_1$}}
 & [1]_3+[4]_2+[6]_2+[{\bf 9}]_1+[{\bf 11}]_1.
\end{align*}
Next, we apply $\mu^2$ to all $2$-tuples with an adjustment, that is,
\begin{align*}
& [1]_3+[4]_2+[{\bf 6}]_2+[9]_1+[11]_1\\
\xrightarrow{\text{$\mu^2_2/2$ forward moves on $[6]_2$}}& [1]_3+[4]_2+[{\bf8}]_2+[9]_1+[11]_1\\
\xrightarrow{\text{an adjustment on $[8]_2$}}& [1]_3+[4]_2+[7]_1+[{\bf9}]_2+[11]_1\\
\xrightarrow{\text{$\mu^2_1/2$ forward move on $[4]_2$}}& [1]_3+[{\bf5}]_2+[7]_1+[9]_2+[11]_1.
\end{align*}
Finally, we apply $\mu^3$ to the $3$-tuple $[1]_3$ with two adjustments, that is,
\begin{align*}
& [{\bf 1}]_3+[5]_2+[7]_1+[9]_2+[11]_1\\
\xrightarrow{\text{$\mu^3_1/3$ forward moves on $[1]_3$}} & [{\bf 3}]_3+[5]_2+[7]_1+[9]_2+[11]_1\\
\xrightarrow{\text{an adjustment on $[3]_3$}} &[2]_2+[{\bf 5}]_3+[7]_1+[9]_2+[11]_1\\
\xrightarrow{\text{an adjustment on $[5]_3$}} &[2]_2+[4]_1+[{\bf 6}]_3+[9]_2+[11]_1.
\end{align*} 
Hence, we obtain $\la=2+2+4+6+6+6+9+9+11\in \sS^3_{(2, 2, 1)}$. Furthermore, 
\begin{align*}
&|\la|=|\beta^{(2, 2, 1)}|+|\mu^1|+|\mu^2|+|\mu^3|=55,\quad \ell(\la)=\ell(\beta^{(2, 2, 1)})=9,\\
&\ell_2(\la)=\ell_2(\beta^{(2, 2, 1)})=2,\quad \ell_3(\la)=\ell_3(\beta^{(2, 2, 1)})=1.
\end{align*}
This yields that the weight and the statistics are preserved.
\end{example}

\section{Conclusion}\label{sec:con}

In the first part of this paper, we investigate two partition sets subject to difference conditions, embed them into the framework of span one linked partition ideals, and obtain two refinements of their generating functions. Going further, a natural question is whether other partition sets can be found that likewise fit this framework; conversely, for a given $q$-series, one may ask whether a partition interpretation can be discovered within this framework. Either direction might facilitate the study of partition identities.

On the other hand, as with the Andrew-Gordon identities in Theorem \ref{thm:AGid}, it is a worthwhile problem to seek a modular form for the multi-sum $q$-series \eqref{id:Ak}, should one exist.

%%%%%%%%%%%%%%%%%%%%%%%%%%%%%%%%%%%%%%
\section*{Acknowledgments}

The author would like to thank Professor Shishuo Fu for helpful discussions. The author is grateful to the anonymous referees for their meticulous suggestions on revising this paper.

%%%%%%%%%%%%%%%%%%%%%%%%%%%%%%%%%%%%%%


\begin{thebibliography}{99}

%\bibitem{and72} G.~E.~Andrews, Two theorems of Gauss and allied identities proved arithmetically, Pacific J. Math. {\bf 41} (1972), 563--578.

%\bibitem{and79} G.~E.~Andrews, Partitions and Durfee dissection, Amer. J. Math. {\bf 101} (1979), 735--742.

\bibitem{and741} G. E. Andrews, A general theory of identities of the Rogers-Ramanujan type, Bull. Am. Math. Soc. {\bf 80} (1974), 1033--1052.

\bibitem{and742} G. E. Andrews, An analytic generalization of the Rogers-Ramanujan identities for odd moduli, Proc. Natl. Acad. Sci. USA {\bf 71} (1974), 4082--4085.

\bibitem{and80} G. E. Andrews, Gap-frequency partitions and the Rogers-Selberg identities, Ars Comb. {\bf 9} (1980), 201--210.

\bibitem{andtp} G.~E.~Andrews, {\it The Theory of Partitions}, Cambridge University Press, Cambridge (1998).

%\bibitem{and20} G.~E.~Andrews, Sequences in partitions, double $q$-series, and the mock theta functions $\rho_3(q)$, from Algorithmic Combinatorics-Enumerative Combinatorics, Special Functions and Computer Algebra, pp.~25--46, Springer, Cham (2020).

%\bibitem{AU23} G.~E.~Andrews and A.~K.~Uncu, {\it Sequences in overpartitions}, The Ramanujan Journal {\bf 61} (2023), 715-729.

%\bibitem{CW23} Z.~Cao and L.~Wang, {\it Multi-sum Rogers-Ramanujan type identities}, Journal of Mathematical Analysis and Applications {\bf 522} (2023) 126960.

%\bibitem{FL24} S.~Fu and H.~Li, Sequences of odd length in strict partitions II: the $2$-measure and refinements of Euler's theorem, preprint.

\bibitem{AC23} G. E. Andrews and S. Chern, Linked partition ideals and a family of quadruple summations, J. Comb. Theory, Ser. A {\bf 200} (2023), 105789.

\bibitem{ACL22} G. E. Andrews, S. Chern and Z. Li, Linked partition ideals and the Alladi-Schur theorem, J. Comb. Theory, Ser. A {\bf 189} (2022), 105614.

\bibitem{che20} S. Chern, Linked partition ideals, directed graphs and $q$-multi-summation, Electron. J. Comb. {\bf 27} (2020), 3.33.

\bibitem{che221} S. Chern, Linked partition ideals and Andrews-Gordon type series for Alladi and Gordon's extension of Schur's identity, Rocky Mt. J. Math. {\bf 52} (2022), 2009--2016.

\bibitem{che222} S. Chern, Linked partition ideals and a Schur-type identity of Andrews, in: Combinatorial and Additive Number Theory V, in: Springer Proc. Math. Stat., vol. 395, Springer, Cham, 2022, pp. 107--117. 

\bibitem{che23} S. Chern, Linked partition ideals and Euclidean billiard partitions, Rev. R. Acad. Cienc. Exactas F\'{i}s. Nat., Ser. A Mat. {\bf 117 (134)} (2023).

\bibitem{CL20} S. Chern and Z. Li, Linked partition ideals and Kanade-Russell conjectures, Discrete Math. {\bf 343} (2020), 111876.

\bibitem{FL241} S.~Fu and H.~Li, Sequences of odd length in strict partitions I: the combinatorics of double sum Rogers-Ramanujan type identities, J. Combin. Theory Ser. A {\bf 219} (2026), Paper No. 106128, 30.

\bibitem{GR04} G.~Gasper and M.~Rahman, {\it Basic Hypergeometric Series, 2nd edition}, Cambridge University Press, Cambridge (2004).

\bibitem{gor61} B. Gordon, A combinatorial generalization of the Rogers-Ramanujan identities, Amer. J. Math. {\bf 83} (1961), 393--399.

\bibitem{GY251} S. Gu and K. Yu, Linked partition ideals and overpartitions, Discrete Math. {\bf 348} (2025), 114380.

\bibitem{GY252} S. Gu and K. Yu, Linked partition ideals and two-color partitions, Ramanujan J. {\bf 67}, 65 (2025).

\bibitem{kil25} Y. C. K\i l\i\c{c}, A combinatorial interpretation of the series for Rogers-Ramanujan-Gordon identities when $k=3$, Ann. Comb. (2025).

%\bibitem{har37} G.~H.~Hardy, {\it Lectures by Godfrey H. Hardy on the mathematical work of Ramanujan}, Edwards Brothers, Ann Arbor, Michigan, 1937, Fall Term 1936. Notes taken by Marshall Hall at the Institute For Advanced Study, Princeton, NJ.

%\bibitem{HW08} G.~H.~Hardy and E.~M.~Wright, {\it An introduction to the theory of numbers, 6th edition}, Oxford University Press, Oxford, 2008.

\bibitem{kur10} K.~Kur\c{s}ung\"{o}z, Parity considerations in Andrews-Gordon identities, European J. Comb. {\bf 31} (2010), 976--1000.

\bibitem{kur19} K.~Kur\c{s}ung\"{o}z, Andrews-Gordon type series for Capparelli's and G\"{o}llnitz-Gordon identities, J. Comb. Theory Ser. A {\bf 165} (2019), 117--138.

\bibitem{kur21} K.~Kur\c{s}ung\"{o}z, Andrews-Gordon type series for Schur's partition identity, Discrete Math. {\bf 344} (2021), 112563.

\bibitem{KS22} K.~Kur\c{s}ung\"{o}z and H.~\"O.~Seyrek, Construction of evidently positive series and an alternative construction for a family of partition generating functions due to Kanade and Russell, Ann. Comb. {\bf 26} (2022), 903--942.

\bibitem{mac16} P. A. MacMahon, {\it Combinatory Analysis vol. 2}, Cambridge University Press, New York, NY, USA, 1916.

\bibitem{ram14} S. Ramanujan, Problem 584, J. Indian Math. Soc. {\bf 6} (1914), 199--200.

\bibitem{ram19}S. Ramanujan, Proof of certain identities in combinatory analysis, Proc. Camb, Philos. Soc. {\bf 19} (1919), 214--216.

\bibitem{rog94} L. J. Rogers, Second memoir on the expansion of certain infinite products, Proc. Lond. Math. Soc. {\bf 25} (1894), 318--343.

\bibitem{sch17} I. Schur, Ein Beitrag zur Additiven Zahlentheorie und zur Theorie der Kettenbr\"{u}che, Sitzungsber. Preuss. Akad. Wiss. Phys.-Math. Klasse (1917), 302--321.

%\bibitem{lov06} J.~Lovejoy, {\it Constant terms, jagged partitions, and partitions with difference two at distance two}, Aequ. Math. {\bf 72} (2006), 299-312.

%\bibitem{LW23} Z.~Li and L.~Wang, {\it Rogers-Ramanujan type identities involving double, triple and quadruple sums}, \href{https://arxiv.org/abs/2306.17085v1}{arXiv:2306.17085v1}.

%\bibitem{mac15} P.~A.~MacMahon, {\it Combinatory Analysis}, Cambridge University Press, London (1915/16).

% \bibitem{mac84} P.~A.~MacMahon, {\it Combinatory Analysis, vol. 2.} A.M.S. Chelsea Publishing, Providence (1984).

%\bibitem{sil18} A.~V.~Sills, {\it An invitation to the Rogers-Ramanujan identities.} CRC Press, Boca Raton (2018).

%\bibitem{sla52} L.~J.~Slater, Further identities of the Rogers-Ramanujan type, Proc. London Math. Soc. (2) {\bf 54} (1952), 147--167.

%\bibitem{syl82} J.~J.~Sylvester, A constructive theory of partitions in three acts, an interact and a exodion, Am. J. Math. {\bf 5} (1882), 251--330.

%\bibitem{WW23} B.~Wang and L.~Wang, {\it Proofs of Mizuno's conjectures on generalized rank two Nahm sums}, \href{https://arxiv.org/abs/2308.14728}{arXiv:2308.14728}.

%\bibitem{wan22} L.~Wang, {\it Identities on Zagier's rank two examples for Nahm's conjecture}, \href{https://arxiv.org/abs/2210.10748v2}{arXiv:2210.10748v2}.

%\bibitem{war97} S.~O.~Warnaar, The Andrews-Gordon identities and $q$-multinomial coefficients, Comm. Math. Phys. {\bf 184} (1997), 203--232.

%\bibitem{WYR23} C.~Wei, Y.~Yu and G.~Ruan, {\it Multidimensional Rogers-Ramanujan type identities with parameters}, \href{https://arxiv.org/abs/2302.00357v3}{arXiv:2302.00357v3}.

% \bibitem{RPS} R.~P.~Stanley, {\it Enumerative Combinatorics, vol. I, 2nd edition}, Cambridge Studies in Advanced Mathematics, Cambridge University Press, (2011).

% \bibitem{all20} K.~Alladi, Euler's partition theorem and refinements without appeal to infinite products, in: {\it Algorithmic Combinatorics: Enumerative Combinatorics, Special Functions and Computer Algebra}, Springer, Cham. Switzerland, 2020, pp.~9--23.

% \bibitem{AB} K.~Alladi and A.~Berkovich, New polynomial analogues of Jacobi's triple product and Lebesgue's identities, Adv. in Appl. Math. {\bf 32} (2004), 801--824.

% \bibitem{AG} K.~Alladi and B.~Gordon, Partition identities and a continued fraction of Ramanujan, J. Combin. Theory Ser. A {\bf 63} (1993), 275--300.

% \bibitem{and87} G.~E.~Andrews, Rogers-Ramanujan identities for two-color partitions, Indian J. Math. {\bf 29} (1987), 117--125.

% \bibitem{and15} G.~E.~Andrews, Basis partition polynomials, overpartitions and the Rogers-Ramanujan identities, J. Approx. Theory {\bf 197} (2015), 62--68.

% \bibitem{and21} G.~E.~Andrews, Partition identities for two-color partitions, Hardy-Ramanujan Journal {\bf 44} (2021), 74--80.

% \bibitem{ABD22}
% G.~E.~Andrews, S.~Bhattacharjee, and M.~G.~Dastidar, Sequences in partitions, Integers \textbf{22} (2022), Paper No.~A32, 9 pp.

% \bibitem{ACL22}
% G.~E.~Andrews, S.~Chern and Z.~Li, On the $k$-measure of partitions and distinct partitions, Algebr. Comb. \textbf{5} (2022), 1353--1361.

% \bibitem{BU} A.~Berkovich and A.~K.~Uncu, Elementary polynomial identities involving $q$-trinomial coefficients, Ann. Comb. {\bf 23} (3-4) (2019), 549--560.

% \bibitem{bes94} C.~Bessenrodt, A bijection for Lebesgue's partition identity in the spirit of Sylvester, Discrete Math. {\bf 132} (1994), 1--10.

% \bibitem{CHS} W.~Y.~C. Chen, Q.-H. Hou, and L.~H. Sun, {\it An iterated map for the Lebesgue identity}, \href{https://arxiv.org/abs/1002.0135}{arXiv:1002.0135v2}.

% \bibitem{CFT} S.~Chern, S.~Fu, and D.~Tang, Multi-dimensional $q$-summations and multi-colored partitions, Ramanujan J. {\bf 51} (2020), 297--306.

% \bibitem{CJ} S.~Corteel and J.~Lovejoy, Overpartitions, Trans. Amer. Math. Soc. {\bf 356} (2004), 1623--1635.

% \bibitem{DK} J.~Dousse and B.~Kim, An overpartition analogue of $q$-binomial coefficients, II: Combinatorial proofs and $(q,t)$-log concavity, J. Combin. Theory Ser. A {\bf 158} (2018), 228--253.

% \bibitem{FT18} S.~Fu and D.~Tang, Partitions with fixed largest hook length, Ramanujan J. {\bf 45} (2018), 375--390.

% \bibitem{gup78} H.~Gupta, The rank-vector of a partition, Fibonacci Quart. {\bf 16} (1978), 548--552.

% \bibitem{KN} W.~J.~Keith and R.~Nath, Partitions with prescribed hooksets, J. Comb. Number Theory {\bf 3}(1) (2011), 39--50.

% \bibitem{LO09} J.~Lovejoy and R.~Osburn, $M_2$-rank differences for partitions without repeated odd parts, J. Th\'eor. Nombres Bordeaux {\bf21} (2009): 313--334.

% \bibitem{mac23} P.~A.~MacMahon, The theory of modular partitions, Proc. Cambridge Phil. Soc. {\bf 21} (1923), 197--204.

% \bibitem{pak06} I.~Pak, Partition bijections, a survey, Ramanujan J. {\bf 12} (2006), 5--75.

% \bibitem{unc21} A.~K.~Uncu, On double sum generating functions in connection with some classical partition theorems, Discrete Math. {\bf 344} (2021) 112562.

% \bibitem{RPS} R.~P.~Stanley, {\it Enumerative Combinatorics, vol. I, 2nd edition}, Cambridge Studies in Advanced Mathematics, Cambridge University Press, (2011).

% \bibitem{zen05} J.~Zeng, The $q$-variations of Sylvester's bijection between odd and strict partitions, Ramanujan J. {\bf 9} (2005), 289--303.

\end{thebibliography}
\end{document}